\newtheorem{Le}{Lemma}[section]
\newtheorem{Def}{Definition}[section]
\newtheorem{Th}{Theorem}[section]
\numberwithin{equation}{section}
\newcommand{\R}{\mathbb{R}}
\newcommand{\Co}{\mathbb{C}}
\newcommand{\eps}{\varepsilon}
\newcommand{\eq}[1]{\begin{equation}{#1}\end{equation}}
\newcommand{\Set}[2]{\Big\{\ {#1}\ \,\Big|\;{#2}\Big\}}
\DeclareMathOperator{\BV}{BV}
\DeclareMathOperator{\supp}{supp}
\DeclareMathOperator{\diam}{diam}
\title{Frostman lemma revisited}
\author{Nikita Dobronravov\footnote{Supported by Theoretical Physics and Mathematics Advancement Foundation “BASIS” grant Junior Leader (Math) 21-7-2-12-2}}
\begin{document}
\maketitle
\begin{abstract}
We study sharpness of various generalizations of Frostman's lemma. These generalizations provide better estimates for the lower Hausdorff dimension of measures. As a corollary, we prove that if a generalized anisotropic gradient~$(\partial_1^{m_1} f, \partial_2^{m_2} f,\ldots, \partial_d^{m_d} f)$ of a function~$f$ in~$d$ variables is a measure of bounded variation, then this measure is absolutely continuous with respect to the Hausdorff~$d-1$ dimensional measure.
\end{abstract}

\section{Introducion}
The classical Frostman lemma says that for any compact set~$F \subset \R^d$, its~$\alpha$-Hausdorff measure is not zero if and only if there exists a finite non-zero Borel measure~$\mu$ supported on~$F$ such that for any open ball~$B_r(x)$ of radius~$r$ centered at~$x\in \R^d$ the inequality
\eq{
\mu(B_r(x)) \leq r^\alpha
}

holds true. See~\cite{Mattila1995} for details (Theorem 8.8). Recall the definition of the lower Hausdorff dimension of a measure. Let~$\mu$ be a possibly vector-valued (with values in a finite dimensional vector space over~$\R$ or~$\Co$) locally finite Borel measure on~$\R^d$. The said definition reads as follows:
\eq{
\dim_{\mathrm{H}} \mu = \inf\Set{\alpha}{ \hbox{ there exists a Borel set } F \hbox{ such that } \dim_{\mathrm{H}} F \leq \alpha \hbox{ and } \mu(F) \ne 0}.
}
Since we will be working with the Hausdorff dimension only, we suppress the indices~$\mathrm{H}$ in our notation. If a measure~$\mu$ satisfies the estimate
\eq{\label{SignedFrostman}
|\mu(B_r(x))| \lesssim r^\alpha
}
for all~$x\in \R^d$ and~$r > 0$ uniformly, then the Frostman lemma implies~$\dim \mu \geq \alpha$. Here and in what follows the notation~$A \lesssim B$ means there exists a uniform constant~$C$ such that~$A \leq CB$. In particular, the constant in~\eqref{SignedFrostman} should depend neither on~$x$ nor on~$r$.  Note that here we use a simpler-to-prove implication in the Frostman lemma. 

The inequality~\eqref{SignedFrostman} is not equivalent to the assertion~$\dim \mu \geq \alpha$ or any small perturbation of it. A good example is the measure~$|x|^{-\beta}\,dx$, $\beta<1$, on the line, which has dimension~$1$, but violates~\eqref{SignedFrostman} for~$\alpha > 1-\beta$. On the other hand, the assertion~$\dim \mu \geq \alpha$ is equivalent to the statement that for~$\mu$ almost every~$x$ and any~$\eps > 0$ the inequality
\eq{
|\mu|(B_r(x)) \leq r^{\alpha - \eps}
}
holds true for all sufficiently small~$r$ (depending on~$\eps$ and~$x$), see~\cite{Falconer1997} (Proposition 10.2). The latter local condition is quite difficult to handle. It is desirable to provide a more uniform global one. 

Let~$\mu$ be a non-negative scalar measure. Consider the energy integral
\eq{
\int\limits_{\R^{2d}}\frac{d\mu(x)d\mu(y)}{|x-y|^{\alpha}}.
}
If the energy integral converges, then~$\dim \mu \geq \alpha$. The energy method may be efficient for some problems, however, has its limitations (see~\cite{Mattila2015}). The next lemma provides another uniform condition sufficient for~$\dim \mu \geq \alpha$.

\begin{Le}[Lemma~$1$ in \cite{StolyarovWojciechowski2014}]\label{SWFrostman}
Suppose that~$\varphi \in C^{\infty}_0(\mathbb{R}^d)$ is a radial non-negative function supported in a unit ball. Assume that~$\varphi$ decreases as the radius grows and~$\varphi(x) = 1$ when~$|x| \leq\frac34$.  
Let~$\mu$ be a measure such that for every collection~$B_{r_j}(x_j)$ of~$d$-dimensional balls such that~$B_{3r_j}(x_j)$ are disjoint the estimate
\begin{equation}\label{SWFrostmanIneq}
\sum\limits_{j}\Big|\int\limits_{\mathbb{R}^d}\varphi_{3r_j}(x_j + y)\,d\mu(y)\Big| \lesssim \big(\sum\limits r_j^{\alpha}\big)^{\beta}
\end{equation}
holds true for some positive~$\alpha$ and~$\beta$. Then~$\dim(\mu) \geq \alpha$.
\end{Le}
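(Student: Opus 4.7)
The goal is to show $\mu(F) = 0$ for every Borel set $F$ with $\dim F < \alpha$, which is equivalent to $\dim \mu \geq \alpha$ by definition. I will first reduce to the case where $\mu$ is a finite non-negative scalar measure; the vector-valued case requires an additional argument since the hypothesis bounds only $|\int \varphi\,d\mu|$ and not $\int \varphi\,d|\mu|$, so the sign structure must be handled compatibly, for instance by passing through a Hahn-type decomposition of each scalar component and restricting to subsets on which $\mu$ has a fixed sign.

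Fix such $F$ and $\eta > 0$. Since $\mathcal{H}^\alpha(F) = 0$, one can cover $F$ by balls $\{B_{r_j}(x_j)\}_j$ with $x_j \in F$ and $\sum_j r_j^\alpha < \eta$. The central combinatorial step is to extract sub-families $\mathcal{F}_1, \ldots, \mathcal{F}_P$ (with $P = P(d)$) such that (i) the tripled balls $\{B_{3r_j}(x_j)\}_{j \in \mathcal{F}_k}$ are pairwise disjoint within each $\mathcal{F}_k$, and (ii) the ``$1$-regions'' $B_{9 r_j/4}(x_j)$ of the extracted balls collectively cover $F$. Granted (i) and (ii), the hypothesis applied to each $\mathcal{F}_k$ yields
\[
\sum_{j \in \mathcal{F}_k} \int \varphi_{3 r_j}(y - x_j)\, d\mu(y) \leq C \eta^\beta,
\]
and since $\varphi_{3 r_j}(y - x_j) \geq \chi_{B_{9 r_j/4}(x_j)}(y)$ and $\mu \geq 0$, the left-hand side dominates $\mu\bigl(\bigcup_{j \in \mathcal{F}_k} B_{9 r_j/4}(x_j)\bigr)$. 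Summing over $k = 1, \ldots, P$ and using (ii) gives $\mu(F) \leq P C \eta^\beta$; letting $\eta \to 0$ concludes $\mu(F) = 0$.

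The main obstacle is arranging (i) and (ii) simultaneously. The $1$-region radius $9 r_j / 4$ is strictly smaller than the disjointness radius $3 r_j$ (ratio $3/4$), so a direct Besicovitch-type extraction on the tripled family $\{B_{3 r_j}(x_j)\}$ produces disjoint-tripled sub-families whose supports cover $F$, but whose $1$-regions may miss the annular sets $B_{3 r_j}(x_j) \setminus B_{9 r_j/4}(x_j)$. A plausible resolution is iteration: after the initial extraction, the portion of $F$ lying in these annular sets is still a Borel subset of $F$, hence of Hausdorff dimension $<\alpha$, and can be covered by a fresh family with $\sum r^\alpha < \eta/2$ on which the extraction is repeated. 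Choosing tolerances $\eta_n = \eta \cdot 2^{-n}$ turns $\sum_n \eta_n^\beta$ into a convergent geometric series, producing a bound of the form $\mu(F) \leq C' \eta^\beta$ that still vanishes with $\eta$. Verifying that this iteration exhausts $F$ up to a $\mu$-null set is the main technical input.
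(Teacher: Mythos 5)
Your plan follows the right overall shape (reduce to a nonnegative measure, cover $F$ with a small $\sum r_j^\alpha$, extract subfamilies on which the hypothesis applies, sum), and it matches the high-level outline the paper uses for its stronger Lemma~\ref{teor1}, from which the stated lemma is deduced as a corollary. However, both steps you flag as needing more work contain genuine gaps, and in both cases the paper's resolution is substantially more elaborate than what you sketch.

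\textbf{Sign reduction.} Saying ``restricting to subsets on which $\mu$ has a fixed sign'' does not close the argument. Even if $x$ lies in the positive set $A_+$ of the Hahn decomposition, the smoothed integral $\int \varphi_{3r}(x+y)\,d\mu(y)$ still receives contributions from $A_-$ inside the ball, so you cannot conclude $\int \varphi_{3r}\,d\mu_+ \lesssim \int \varphi_{3r}\,d\mu$ pointwise. The paper fixes this with a density-type argument: by Lemma~\ref{PPP}, $\mu_+$-almost every point of $A_+$ lies in $P_{+,\eps}^{(N)}$, the set of points where $\mu_-(B_r(x))\le \eps\,\mu_+(B_r(x))$ for all small $r$, and Lemma~\ref{noc} then gives the comparison $\int\psi\,d\mu_+ \le 2\int\psi\,d\mu$ at those points and scales. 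Without some version of this localization, the reduction to $\mu\ge 0$ is not justified.

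\textbf{Covering extraction.} You correctly identify the tension between disjointness of the $3r$-balls and coverage by the $9r/4$-regions. But the iteration you propose is not shown to exhaust $F$: writing $F_\infty = \bigcap_n F_n$ for the part repeatedly missed, the argument delivers only $\mu(F) \le C'\eta^\beta + \mu(F_\infty)$, and $\mu(F_\infty)$ is exactly the quantity you are trying to bound --- you are back where you started. Moreover, for general $\alpha\in[0,d]$ one \emph{cannot} in fact achieve a bounded number $P(d)$ of disjoint-on-triples subfamilies whose smaller regions still cover and whose $\sum r^\alpha$ is uniformly controlled; the paper explicitly remarks that its Lemma~\ref{super1} ``does not provide any control on $N$.'' The paper's mechanism is different: it first builds an \emph{optimal} finite cover of a compact $K$ (Lemma~\ref{opti}, minimizing $\sum r^\alpha$ over all covers of a fixed size), and then runs a Vitali iteration inside that single fixed cover. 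Optimality forces the weight of the remaining balls to decay geometrically, $\sum_{\mathfrak{B}^{>j+1}} r^\alpha \le q\sum_{\mathfrak{B}^{>j}} r^\alpha$ with $q=1-9^{-\alpha}$, giving countably many subfamilies $\mathfrak{B}^j$ with $\sum_{\mathfrak{B}^j} r_i^\alpha \le C q^j a$. The final sum $\sum_j g(Cq^j a)$ is then controlled via a Dini-type integral, which for $g(t)=t^\beta$, $\beta>0$, is automatic. This geometric-decay device, not a fresh-cover iteration with halved tolerances, is what actually closes the argument.

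In short: your outline is the right one, and you have honestly flagged the two places where work remains, but neither gap is cosmetic; each requires a specific nontrivial device (a Hahn-decomposition density lemma for the signs, and an optimal-cover Vitali iteration for the covering), both of which the paper supplies.
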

Here $\varphi_t(z)=\varphi\left(\frac{z}{t}\right)$.

In the case~$\beta = 1$, the above lemma reduces to the classical Frostman lemma, however, in the case~$\beta < 1$ it is a priori stronger (Lemma~\ref{justification} below provides examples). In~\cite{StolyarovWojciechowski2014}, the lemma served as a technical tool to prove that any vector-valued measure of the form~$(\partial_1^{m_1} f, \partial_2^{m_2} f,\ldots, \partial_d^{m_d} f)$ has dimension at least~$d-1$. Here~$\partial_j$ is the operator of differentiation with respect to~$j$-th coordinate and~$m_1, m_2,\ldots, m_d$ are arbitrary natural numbers. Note that the case~$m_1=m_2=\ldots =m_d = 1$ may be deduced from the co-area formula for~$\BV$ functions (see~\cite{AFP2000} for this formula).

In recent years, there is an increasing interest in the geometry of measures satisfying PDE or Fourier constraints, like gradients of functions of bounded variation, or  divergence free measures, or, for example, the generalized gradient measures as above. We refer the reader to the papers~\cite{APHF2018},~\cite{DePhilippisRindler2016}, and~\cite{RoginskayaWojciechowski2006}, to mention a few.  Lemma~\ref{SWFrostman} appeared useful in this context, in particular, an analog of this lemma plays the pivotal role in~\cite{ASW2018}, where a simpler discrete analog of the dimension problem for Fourier constrained measures is solved. A similar lemma provides good (better than the ones given by the energy method) dimensional estimates for Riesz products, see~\cite{ASW2020}. The purpose of this article is to sharpen and generalize Lemma~\ref{SWFrostman}. These considerations will also lead to a corollary about generalized gradients (see Theorem~\ref{Stol} below).

I am grateful to my scientific adviser D. M. Stolyarov for statement of the problem and attention to my work.

\section{Statement of results}
There are several essences involved in Lemma~\ref{SWFrostman}. The function~$\varphi$ is, in a sense, auxiliary. It allows to replace the expression~$\mu(B_r(x))$ with a smoother one. Namely, the integral
\eq{
\int\limits_{\mathbb{R}^d}\varphi_{3r}(x + y)\,d\mu(y)
} 
may be thought of as a smoothing of~$\mu(B_r(x))$. So, the expression on the left hand side in~\eqref{SWFrostmanIneq} may be thought of as the sum~$\sum_j |\mu(B_{r_j}(x_j))|$.  The parameter~$\beta$ is quite mysterious since the estimate~$\dim \mu \geq \alpha$ does not depend on~$\beta$. We prefer to replace the number~$\beta$ with a function, i.e. to estimate the left hand side of~\eqref{SWFrostmanIneq} with~$g(\sum_j r_j^\alpha)$, where~$g$ is a certain weight function; in Lemma~\ref{SWFrostman},~$g(r) = r^{\beta}$. This generalization seems reasonable in the light of the entropy estimates in~\cite{ASW2018}. 
\begin{Def}\label{regyl}
A continuous function~$g\colon\mathbb{R}_+\rightarrow \mathbb{R}_+$ is called regular provided~$g(0)=0$,~$g$ does not decrease, and for some fixed~$c > 1$ we have~$g(x)\asymp g(cx)$.
\end{Def}
Here and in what follows~$A \asymp B$ is short for~$A \lesssim B$ and~$B \lesssim A$. We say that the functions~$\varphi$ and~$\psi$ are equivalent if~$\varphi(x) \asymp \psi(x)$ for any~$x\in \R^d$.

We are ready to formulate our first result. By a disjoint family of balls we mean a collection of balls non of which intersect.
\begin{Le}\label{teor1}
Let~$\varphi$ be a radially symmetric, radially non-increasing function supported in the unit ball. Assume also~$\varphi(x)=1$ when~$|x|\leqslant \frac{3}{4}$. Let~$\psi$ be equivalent to~$\varphi$. Let~$\mu$ be an~$\R$-valued signed measure of locally bounded variation, let~$g$ be a regular function that satisfies the Dini condition
\eq{
\int\limits_0^1\frac{g(t)}{t}dt<\infty.
}
Assume that  
\eq{
\sum\limits_{B_{r_j}(x_j)\in \mathfrak{B}}\left|\underset{\mathbb{R}^d}{\int}\psi\left(\frac{y-x_j}{r_j} \right) d\mu(y) \right|\lesssim g\Big(\sum\limits_{B_{r_j}(x_j)\in \mathfrak{B}}r_j^\alpha\Big)
}
for any disjoint family of balls~$\mathfrak{B}$. Then,
\eq{
|\mu|(A)\lesssim h(\mathcal{H}^{\alpha}(A)),
}
where
\eq{
h(x)=\int\limits_0^x\frac{g(t)}{t}dt
}
and~$A \subset \R^d$ is an arbitrary Borel set.
\end{Le}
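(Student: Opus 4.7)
The plan is to apply the Hahn--Jordan decomposition $\mu=\mu^+-\mu^-$ and then invoke a Besicovitch covering argument at small scales. We may assume $M:=\mathcal{H}^\alpha(A)<\infty$, else the bound is vacuous; since $|\mu|=\mu^++\mu^-$, it suffices to establish $\mu^+(A)\lesssim h(M)$, the estimate for $\mu^-$ being symmetric.

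The key technical step is to turn the hypothesis on $|\int \psi_r(y-x)\,d\mu(y)|$---which may be small because of cancellation between $\mu^+$ and $\mu^-$---into a lower bound in terms of $\mu^+$. From $\psi\asymp\varphi$ and $\varphi=1$ on $B_{3/4}$ one gets $\psi\ge c\,\mathbf{1}_{B_{3/4}}$, whence $\int \psi_r\,d\mu^+\gtrsim \mu^+(B_{3r/4}(x))$ and $\int\psi_r\,d\mu^-\le C\mu^-(B_r(x))$. The mutual singularity of $\mu^\pm$, combined with the differentiation theorem for Radon measures (and standard density arguments governing the ratio $\mu^-(B_r(x))/\mu^+(B_{3r/4}(x))$), produces for $\mu^+$-a.e.\ $x$ a scale $\delta_x>0$ such that
\[\left|\int\psi_r(y-x)\,d\mu(y)\right|\gtrsim\mu^+(B_{3r/4}(x))\quad\text{for all }r\le \delta_x.\]
Discarding a $\mu^+$-null subset of $A$, we may take this bound to hold pointwise on $A\cap E^+$, where $E^+$ carries $\mu^+$.

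Next, stratify by introducing $A_\eta:=\{x\in A\cap E^+:\delta_x\ge \eta\}$, so that $A_\eta\uparrow A\cap E^+$ as $\eta\to 0^+$. For fixed $\eta,\varepsilon>0$, cover $A_\eta$ by balls $\{B_{3r_j/4}(x_j)\}_j$ with $x_j\in A_\eta$, $r_j\le \eta$, and $\sum_j(3r_j/4)^\alpha\le M+\varepsilon$ (possible by the definition of $\mathcal{H}^\alpha$, after moving centers into $A_\eta$ at the cost of a universal constant). Then $\bigcup_j B_{r_j}(x_j)\supset A_\eta$, so Besicovitch's theorem applied to $\{B_{r_j}(x_j)\}$ yields at most $C_d$ disjoint sub-collections still covering $A_\eta$. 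Applying the hypothesis to each sub-collection and summing produces $\sum_j|\int \psi_{r_j}(y-x_j)\,d\mu(y)|\lesssim g(M+\varepsilon)$ (using the regularity of $g$). Invoking the pointwise lower bound, $\sum_j\mu^+(B_{3r_j/4}(x_j))\lesssim g(M+\varepsilon)$; since $A_\eta\subset\bigcup_j B_{3r_j/4}(x_j)$, we conclude $\mu^+(A_\eta)\lesssim g(M+\varepsilon)$. Sending $\varepsilon\to 0$ then $\eta\to 0$ yields $\mu^+(A)\lesssim g(M)\lesssim h(M)$, where the last inequality follows from $h(M)\ge \int_{M/c}^M g(t)/t\,dt\gtrsim g(M)$ by regularity. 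Symmetric treatment of $\mu^-$ finishes the argument.

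The main obstacle is the signed-measure character of the hypothesis: without further care, $|\int\psi_r\,d\mu|$ can be much smaller than $|\mu|(B_r(x))$, so that the hypothesis is not directly comparable to the $|\mu|$-mass of balls. The differentiation theorem for mutually singular Radon measures circumvents this at $\mu^+$-typical points, at the cost of working with a scale $\delta_x$ that depends on $x$; the stratification by $A_\eta$ is the device that combines these local pointwise estimates into a single global cover to which Besicovitch and the hypothesis can be simultaneously applied.
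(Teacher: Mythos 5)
Your approach correctly starts from the Hahn decomposition and a pointwise density argument, but the covering step contains a genuine gap that the paper's machinery (supercoverings and Lemma~\ref{super1}) exists specifically to repair.

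The fatal step is: ``Besicovitch's theorem applied to $\{B_{r_j}(x_j)\}$ yields at most $C_d$ disjoint sub-collections still covering $A_\eta$ \dots since $A_\eta\subset\bigcup_j B_{3r_j/4}(x_j)$, we conclude \dots''. Besicovitch/Morse (Lemma~\ref{Morse1} in the paper) guarantees that the selected \emph{large} balls $B_{r_j}(x_j)$ cover $A_\eta$, not the concentric \emph{small} balls $B_{3r_j/4}(x_j)$. When you thin the family to obtain $O(1)$ disjoint subcollections, the discarded balls' small parts are covered only by the \emph{large} parts of the survivors, so $A_\eta\subset\bigcup_j B_{3r_j/4}(x_j)$ fails after selection. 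Without it you cannot write $\mu^+(A_\eta)\le\sum_j\mu^+(B_{3r_j/4}(x_j))$. This is exactly what the notion of supercovering (Definition~\ref{super}) is designed to preserve, and Lemma~\ref{super1} shows the price: one keeps a supercovering of $K$ while splitting the large balls into disjoint subfamilies $\mathfrak{B}^j$, but the number $N$ of subfamilies is \emph{unbounded} (see the remark after the lemma) with only the geometric decay $\sum_{\mathfrak{B}^j}r_i^\alpha\le Cq^j a$. Summing $g(Cq^j a)$ over $j$ is precisely where the Dini condition on $g$ is used, and it yields the weaker conclusion $h(\mathcal{H}^\alpha(A))$ rather than $g(\mathcal{H}^\alpha(A))$. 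If your $O(1)$ Besicovitch argument were correct, the Dini hypothesis would be unnecessary and one would obtain the stronger $g$-bound for every $\alpha\in[0,d]$; but the paper isolates that conclusion as Lemma~\ref{teor2}, available only for $\alpha>d-1$ via the more delicate Lemma~\ref{super2}, whose proof relies essentially on $\alpha>d-1$ to make a geometric series converge.

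There is also a secondary technical issue in the density step: you invoke the differentiation theorem to control the cross-radius ratio $\mu^-(B_r(x))/\mu^+(B_{3r/4}(x))$. The standard differentiation theorem for mutually singular measures controls $\mu^-(B_r)/\mu^+(B_r)$ at a common radius; transferring to mismatched radii needs an additional argument. The paper sidesteps this by using that $\varphi$ is radial and radially nonincreasing: $\varphi_r$ is a superposition (layer-cake integral) of indicators $\mathbf{1}_{B_s(x)}$ with $s\le r$, so one compares $\mu^\pm$ at the \emph{same} radius for each layer (this is Lemma~\ref{noc} and the unnumbered lemma preceding it), and then passes to $\psi\asymp\varphi$ by the two-sided comparison. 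This point is repairable, but the covering gap is not without essentially reproducing Lemma~\ref{super1}.
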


It is easy to see that for regular $g$ there is the inequality $g\leqslant h$.
In particular,~$\mu$ is absolutely continuous with respect to~$\mathcal{H}^\alpha$, and Lemma~\ref{SWFrostman} is a corollary of Lemma~\ref{teor1}. In the case~$\alpha > d-1$, we are able to get rid of the condition that~$\varphi$ is radial. 
\begin{Le}\label{teor2}
Let~$\varphi$ be a bounded function supported in the unit ball and such that~$\varphi(x)\geqslant1$ when~$|x|\leqslant \frac{3}{4}$. Let~$\mu$ be an~$\R$-valued signed measure of locally bounded variation and let~$g$ be a regular function. Assume also that~$\alpha > d-1$. If
\eq{
\sum\limits_{B_{r_j}(x_j)\in \mathfrak{B}}\left|\underset{\mathbb{R}^d}{\int}\varphi\left(\frac{y-x_j}{r_j} \right) d\mu(y) \right|\lesssim g\Big(\sum\limits_{B_{r_j}(x_j)\in \mathfrak{B}}r_j^\alpha\Big)
}
for any disjoint family of balls~$\mathfrak{B}$, then
\eq{
|\mu|(A)\lesssim g(\mathcal{H}^{\alpha}(A))
}
for any Borel set~$A \subset \R^d$.
\end{Le}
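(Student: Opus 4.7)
The plan is to reduce the global bound on $|\mu|(A)$ to local estimates at points where $\mu$ behaves essentially like a one-signed measure, and then to patch these estimates together by a Vitali-type disjoint covering. By the polar decomposition, write $d\mu = \sigma\,d|\mu|$ with $\sigma \in \{-1,+1\}$ $|\mu|$-almost everywhere. The Lebesgue-Besicovitch differentiation theorem applied to the Radon measure $|\mu|$ yields, at $|\mu|$-a.e. $x$,
\[
\lim_{r \to 0^+}\frac{1}{|\mu|(B_r(x))}\int_{B_r(x)}|\sigma(y) - \sigma(x)|\,d|\mu|(y) = 0.
\]
On the other hand, for every $x$ in the support of $|\mu|$, local finiteness of $|\mu|$ together with an elementary geometric-series argument (iterating the ratio $|\mu|(B_{(4/3)r})/|\mu|(B_r)$) gives $\liminf_{r \to 0^+}|\mu|(B_r(x))/|\mu|(B_{3r/4}(x)) = 1$. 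Combining these with the hypotheses $\varphi \ge 1$ on $B_{3/4}$ and $\|\varphi\|_\infty < \infty$, one obtains that for $|\mu|$-a.e. $x$ there is a sequence $r_k(x) \downarrow 0$ along which
\[
\Bigl|\int\varphi\Bigl(\frac{y-x}{r_k(x)}\Bigr)d\mu(y)\Bigr| \gtrsim |\mu|\bigl(B_{3r_k(x)/4}(x)\bigr).
\]

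Next I would perform the covering. Let $A' \subset A$ be the full-$|\mu|$-measure Borel subset on which the above lower bound holds, so $|\mu|(A) = |\mu|(A')$. Fix $\eta > 0$, start from a Hausdorff-efficient cover of $A$ with total $\alpha$-sum below $\mathcal{H}^\alpha(A) + \eta$, and refine it into a Vitali-type family $\{B_{3r_j/4}(x_j)\}$ with $x_j \in A'$, with $r_j$ drawn from the density sequence at $x_j$, with the contractions $B_{3r_j/4}(x_j)$ pairwise disjoint and covering $A'$ up to a $|\mu|$-null set, and with $\sum_j r_j^\alpha$ still controlled by $\mathcal{H}^\alpha(A) + \eta$. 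The inflated balls $\{B_{r_j}(x_j)\}$ need not be disjoint, but because their $\tfrac{3}{4}$-contractions are, a standard packing argument in $\R^d$ bounds the overlap multiplicity of the inflated family by a dimensional constant $N_d$; graph coloring then splits $\{B_{r_j}(x_j)\}$ into $N_d$ genuinely disjoint subfamilies. Applying the lemma's hypothesis to each color class and using the monotonicity of $g$ gives
\[
\sum_j\Bigl|\int\varphi\Bigl(\frac{y-x_j}{r_j}\Bigr)d\mu(y)\Bigr| \lesssim g\Bigl(\sum_j r_j^\alpha\Bigr) \lesssim g\bigl(\mathcal{H}^\alpha(A)+\eta\bigr).
\]
Combined with the first paragraph and the $|\mu|$-a.e. covering property, this yields $|\mu|(A) \lesssim g(\mathcal{H}^\alpha(A)+\eta)$; letting $\eta \to 0^+$ and using continuity of $g$ finishes the proof.

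The main obstacle is the compatibility inside the Vitali step: simultaneously ensuring disjointness of the contracted balls, density-sequence radii at density-point centers, and a total $\alpha$-sum close to $\mathcal{H}^\alpha(A)$. This is where I expect the assumption $\alpha > d-1$ to be used. An efficient Hausdorff cover of $A$ need not have centers or radii adapted to the density structure of $|\mu|$; the re-centering from generic cover-balls onto good points of $A'$ and the associated inflation of radii cause a loss that must be absorbed into the $\alpha$-sum, and the estimate that this loss stays bounded seems to rely on $\alpha$ strictly exceeding the codimension-one threshold $d-1$. Once this step is set up, the removal of the Dini condition present in Lemma~\ref{teor1} should follow: in Lemma~\ref{teor1} the non-negative radial profile forces one to iterate across a logarithmic range of scales and pick up $h(x)=\int_0^x g(t)/t\,dt$, whereas here a single scale at each density point suffices, leaving $g$ itself on the right-hand side.
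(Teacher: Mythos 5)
Your high-level plan — cover $A$ by balls whose inner contractions capture $|\mu|$ while the inflated balls split into boundedly many disjoint families to which the hypothesis can be applied — matches the paper's strategy, but two of your key steps are not established and one is actually false.

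First, the claim that $\liminf_{r\to 0}|\mu|(B_r(x))/|\mu|(B_{3r/4}(x))=1$ at $|\mu|$-a.e.\ $x$ is not correct: for Lebesgue measure on $\R^d$ the ratio equals $(4/3)^d$ at every point, so the ``geometric-series argument'' you invoke cannot yield $1$. What is true, by a Besicovitch covering argument, is that the $\liminf$ is bounded by a dimensional constant $N_d$ for $|\mu|$-a.e.\ $x$, and that weaker statement does suffice: combined with the Besicovitch--Lebesgue density of the sign $\sigma$ one can still extract radii at which $\bigl|\int\varphi((y-x)/r)\,d\mu\bigr|\gtrsim|\mu|(B_{3r/4}(x))$, provided $\varphi\ge 0$. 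As written, though, the lower bound is derived from a false premise, so that step has a genuine error. The paper avoids density arguments altogether: it takes the Hahn decomposition, approximates the positive set from inside by a compact $K$ and from outside by an open $V$ with $\mu_-(V)\le\varepsilon$, uses $\varphi\ge 1$ on the $\tfrac{3}{4}$-contraction to get $\mu_+(K)\lesssim\sum\int\varphi\,d\mu_+$ directly, and then writes $\int\varphi\,d\mu_+=\int\varphi\,d\mu+\int\varphi\,d\mu_-$ so that the unwanted $\mu_-$ contribution is controlled globally by $C_1\|\varphi\|_\infty\mu_-(V)\to 0$. This is both more elementary and more robust than the pointwise-density route.

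Second, and more importantly, the covering step that you yourself flag as ``the main obstacle'' is precisely where the paper's Lemma~\ref{super2} does the hard work, and you do not supply a substitute. You need, at one scale, a family of balls centered in $A$ whose $\tfrac{1}{3}$-contractions cover a compact subset of the positive set, which splits into $C_1(d)$ disjoint subfamilies each with $\sum r_i^{\alpha}\lesssim\mathcal{H}^{\alpha}(A)$; producing such a family from an efficient Hausdorff cover requires packing each cover ball with smaller balls concentrated in concentric annuli, and the convergence of the resulting series $\sum_k q^{k(\alpha-d+1)}$ is exactly where $\alpha>d-1$ enters. Note also that you over-constrain this step by demanding pairwise disjoint $\tfrac{3}{4}$-contractions: for the inequality $|\mu|(A')\le\sum|\mu|(B_{3r_j/4}(x_j))$ you only need the contractions to \emph{cover} $A'$, and for the coloring step you only need the full balls to split into boundedly many disjoint subfamilies — both of which the paper's supercovering provides without disjoint contractions. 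As it stands, the combination of the false doubling claim and the unproved covering lemma leaves the proposal incomplete at its two central points.
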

Though lemmas~\ref{teor1} and~\ref{teor2} look similar, their proofs are quite different. Lemma~\ref{teor2} has a nice corollary.
\begin{Th}\label{Stol}
Let~$m_1, m_2, \ldots, m_d$ be natural numbers. Assume~$f$ is a distribution such that for any~$j = 1,2,\ldots,d$ the distribution~$\partial^{m_j}_j f$ is a signed measure of bounded variation. Then, these signed measures are absolutely continuous with respect to~$\mathcal{H}^{d-1}$.
\end{Th}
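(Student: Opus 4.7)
The plan is to deduce Theorem~\ref{Stol} from Lemma~\ref{teor2}. Fix an index $k$ and set $\mu := \partial_k^{m_k} f$; it suffices to show $\mu \ll \mathcal{H}^{d-1}$. Choose any $\alpha > d-1$. If we can verify the hypothesis of Lemma~\ref{teor2} for $\mu$ with this $\alpha$ and some regular weight $g$, then that lemma yields $|\mu|(A) \lesssim g(\mathcal{H}^\alpha(A))$ for every Borel $A \subset \R^d$. But $\mathcal{H}^{d-1}(A) = 0$ forces $\mathcal{H}^\alpha(A) = 0$, and $g(0) = 0$ by regularity, so $|\mu|(A) = 0$; this is the claimed absolute continuity.

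What remains is to verify the estimate
\[
\sum\limits_{B_{r_j}(x_j)\in \mathfrak{B}}\left|\int_{\R^d}\varphi\Big(\frac{y-x_j}{r_j}\Big)\,d\mu(y)\right|\lesssim g\Big(\sum\limits_{B_{r_j}(x_j)\in \mathfrak{B}}r_j^\alpha\Big)
\]
for every disjoint family $\mathfrak{B}$, for a suitable bounded $\varphi$ with $\varphi \geq 1$ on $|x| \leq 3/4$ and $\supp\varphi \subset B_1(0)$. The standard manoeuvre used in \cite{StolyarovWojciechowski2014} is to choose signs $\epsilon_j \in \{\pm 1\}$ so that the left-hand side equals $\int G\, d\mu$ for $G(y) = \sum_j \epsilon_j \varphi((y-x_j)/r_j)$, which is a superposition of bumps with disjoint supports (by disjointness of the balls and the support condition on $\varphi$). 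The key point is to exploit the full assumption---that $\mu_l := \partial_l^{m_l} f$ is a finite measure for \emph{every} $l$, not only for $l = k$. Formally transferring derivatives gives, for every $l$,
$\int G\,d\mu_k = \pm \int H_l\,d\mu_l$, where $H_l := \partial_l^{-m_l}\partial_k^{m_k} G$ and $\partial_l^{-m_l}$ is iterated integration along the $l$-th axis. By scaling, $H_l$ is a disjoint-support sum of $l$-directional profiles at scale $r_j$ with amplitude $r_j^{m_l - m_k}$, and the required bound follows upon estimating each term and optimising the choice of $l$ for each ball.

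The main obstacle is this last step. Because the kernel of $\partial_l^{-m_l}$ is one-sided and grows polynomially, the profiles making up $H_l$ carry long tails along the $l$-axis and a naive $L^\infty$ bound is insufficient; disjointness of the bumps together with a careful estimate of the kernel is needed to isolate the correct power $r_j^\alpha$ on each ball and produce a summable right-hand side. This is precisely the type of estimate carried out in \cite{StolyarovWojciechowski2014} in order to conclude $\dim\mu \geq d-1$ via Lemma~\ref{SWFrostman}; the upgrade from a dimension bound to full absolute continuity is supplied entirely by Lemma~\ref{teor2}, so no change to the measure-side estimate is required.
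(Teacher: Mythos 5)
Your high-level instinct — reduce Theorem~\ref{Stol} to Lemma~\ref{teor2} — matches the paper, but the way you propose to apply the lemma cannot work, and the paper does something essentially different.

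You apply Lemma~\ref{teor2} directly to $\mu=\partial_k^{m_k}f$ viewed as a measure on $\R^d$, with an exponent $\alpha>d-1$. The hypothesis of that lemma, however, is genuinely unattainable for these gradient measures at any exponent $\alpha>d-1$. Take the model case $m_1=\dots=m_d=1$, $f=\mathbf{1}_{[0,1]^d}$, so that $\mu=\partial_1 f$ is (up to sign) the difference of the $\mathcal{H}^{d-1}$ restrictions to two opposite faces. Place $n\sim r^{-(d-1)}$ disjoint balls of a common small radius $r$ with centers on the face $\{0\}\times[0,1]^{d-1}$. For each of these balls $\int\varphi((y-x_j)/r)\,d\mu(y)\gtrsim r^{d-1}$, so the left-hand side of the hypothesis is $\gtrsim n r^{d-1}\sim 1$, while $\sum_j r_j^\alpha=nr^\alpha\sim r^{\alpha-(d-1)}\to 0$ as $r\to 0$. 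Since any regular $g$ has $g(0)=0$, the required bound $1\lesssim g(r^{\alpha-(d-1)})$ fails in the limit. So no choice of bounded $\varphi$ and regular $g$ can satisfy the hypothesis of Lemma~\ref{teor2} in $\R^d$ with $\alpha>d-1$, and the verification you sketch (transferring derivatives, estimating the antiderivative kernel) cannot produce such an estimate — this is not a technical difficulty but an obstruction. Your closing remark that ``no change to the measure-side estimate is required'' is precisely where the argument breaks.

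What the paper does instead is a dimensional reduction. It fixes a test function $\psi\in C_0^\infty(\R)$, forms the scalar measure $\mu_\psi$ on $\R^{d-1}$ defined by $\mu_\psi(B)=\int_{B\times\R}\psi(x_1)\,d\mu_1(x)$, and invokes the disjoint-ball estimate of Lemma~\ref{23} (quoted from \cite{StolyarovWojciechowski2014}), which is stated for balls in $\R^{d-1}$ and gives exponent $\alpha=d-1$ with $g(t)=t^{1/q_i'}$. The key point is that Lemma~\ref{teor2}, applied to a measure on $\R^{d-1}$, only requires $\alpha>(d-1)-1=d-2$, so $\alpha=d-1$ is admissible there even though it is not in $\R^d$. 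One then concludes $\mu_\psi\ll\mathcal{H}^{d-1}$ on $\R^{d-1}$ and transfers this back to $\mu_1$ on $\R^d$ by approximating characteristic functions of intervals and appealing to Lemma~\ref{25}. This slab-decomposition step is not optional bookkeeping; it is the device that lowers the ambient dimension so that the threshold in Lemma~\ref{teor2} clears, and it is the genuinely new idea you would need to add to make your proof go through.
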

We also provide a version of Lemma~\ref{SWFrostman} where the function~$\varphi$ is not compactly supported. This version is surprisingly easier to prove.
\begin{Le}\label{teor3}
Let~$\varphi$ be a bounded radially symmetric, radially non-increasing function such that~$B_3(0)\subset\supp \varphi $. Let~$\psi$  be equivalent to~$\varphi$. Let~$\mu$ be a signed measure of bounded variation, let~$g$ be a regular function. Assume
\eq{
\sum\limits_{B_{r_j}(x_j)\in \mathfrak{B}}\left|\underset{\mathbb{R}^d}{\int}\psi\left(\frac{y-x_j}{r_j} \right) d\mu(y) \right|\lesssim g\Big(\sum\limits_{B_{r_j}(x_j)\in \mathfrak{B}}r_j^\alpha\Big)
}
for any disjoint family of balls~$\mathfrak{B}$. If
\eq{
|\varphi(x)|=O(|x|^{-\alpha}), \ \ x\rightarrow \infty,
}
then~$\mu$ is absolutely continuous with respect to~$\mathcal{H}^{\alpha}$. If
\eq{
|\varphi(x)| = o(|x|^{-\alpha}),\quad x\to \infty,
}
then the inequality
\eq{
|\mu|(A)\lesssim g(\mathcal{H}^{\alpha}(A))
}
holds true for any Borel set~$A$.
 \end{Le}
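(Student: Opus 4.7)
The plan is to combine the hypothesis, applied to a single ball, with Lebesgue--Besicovitch differentiation for signed measures and a Vitali-type covering argument, using the decay of $\psi$ at infinity to control the tail of the smoothed integral. First, I would specialize the hypothesis to a single-ball family to obtain the uniform pointwise bound $|G_r(x)| \lesssim g(r^\alpha)$, where $G_r(x) := \int \psi\bigl(\tfrac{y-x}{r}\bigr)\,d\mu(y)$. For a cutoff $M > 0$, I would split $G_r(x)$ into a bulk part on $B_{Mr}(x)$ and a tail on its complement. By Lebesgue--Besicovitch differentiation, combined with Federer's theorem on the mutual singularity of $\mu^+$ and $\mu^-$, at $|\mu|$-a.e.\ $x$ and for all sufficiently small $r$ the bulk equals $h(x) \int_{|y-x|\leq Mr} \psi(\tfrac{y-x}{r})\, d|\mu|(y)$ up to relative error $o(1)$, so the positivity $\psi \geq c_0 > 0$ on $B_3(0) \subset \supp \psi$ yields the comparison $\bigl|\int_{|y-x|\leq 3r} \psi(\tfrac{y-x}{r})\,d\mu(y)\bigr| \gtrsim |\mu|(B_{3r}(x))$.

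The tail is controlled by the decay assumption: writing $|\varphi(u)| \leq \eta(M) |u|^{-\alpha}$ for $|u| \geq M$,
\[
\Bigl|\int_{|y-x|>Mr} \psi\bigl(\tfrac{y-x}{r}\bigr)\, d\mu(y)\Bigr| \leq \eta(M)\, r^\alpha \int_{|y-x|>Mr} |y-x|^{-\alpha} d|\mu|(y) \lesssim \eta(M)\, M^{-\alpha}\, |\mu|(\R^d),
\]
which is universally bounded in the $O$-case and tends to $0$ as $M\to\infty$ in the $o$-case. Combining the three ingredients yields $|\mu|(B_{3r}(x)) \lesssim g(r^\alpha) + \eta(M)$ at $|\mu|$-a.e.\ $x$ and small $r$. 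In the $O$-case, sending $r \to 0$ first and then $\eta \to 0$ (by enlarging $M$) shows that $|\mu|$ has no atoms and assigns zero mass to any $\mathcal{H}^\alpha$-null set, and a standard Vitali--Besicovitch covering argument on $|\mu|$-generic points promotes this to absolute continuity $\mu \ll \mathcal{H}^\alpha$.

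In the $o$-case, to reach the sharp bound $|\mu|(A) \lesssim g(\mathcal{H}^\alpha(A))$ one cannot sum per-ball bounds, since for concave $g$ the sum $\sum g(r_j^\alpha)$ vastly exceeds $g(\sum r_j^\alpha)$. Instead, I would apply the hypothesis \emph{directly} to a disjoint Vitali--Besicovitch family $\{B_{r_j}(x_j)\}$ chosen with $x_j \in A$ Lebesgue--Besicovitch-generic, $r_j$ small, $|\mu|(A \setminus \bigcup_j B_{r_j}(x_j)) = 0$, and $\sum r_j^\alpha \lesssim \mathcal{H}^\alpha(A)$, giving
\[
|\mu|(A) \leq \sum_j |\mu|(B_{r_j}(x_j)) \lesssim \sum_j |G_{r_j}(x_j)| \lesssim g\Bigl(\sum_j r_j^\alpha\Bigr) \lesssim g(\mathcal{H}^\alpha(A)).
\]
The main obstacle is the second inequality: the per-ball approximation carries a tail error, and the cumulative tail over the covering must be absorbed into $g(\sum r_j^\alpha)$. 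A Fubini exchange shows the total tail is dominated by $\eta(M) |\mu|(\R^d)$ times a combinatorial factor coming from the cover, and the $o$-decay of $\psi$ is exactly what allows this to be made negligible relative to $g(\sum r_j^\alpha)$ by choosing $M$ large before fixing the Vitali scale; coordinating this choice of $M$ with the simultaneous requirements that the cover be $|\mu|$-exhaustive and close to $\mathcal{H}^\alpha$-optimal is the technical heart of the argument.
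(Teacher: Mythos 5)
Your overall plan matches the paper's in spirit --- reduce to $\mu_+$ via Lebesgue differentiation and the Hahn decomposition (the paper's Lemmas~\ref{PPP} and~\ref{noc}), split the smoothed integral into a bulk and a tail, kill the tail by the decay, and apply the hypothesis to a fine cover split into disjoint batches --- but your tail cutoff at $|y-x| = Mr$ is fatal. That cutoff produces a per-ball error $\lesssim \eta(M) M^{-\alpha}\,|\mu|(\R^d)$ that carries \emph{no} factor of $r^\alpha$, so when you sum it over a covering family the cumulative tail is proportional to the number of balls, which diverges as the radii shrink; no choice of $M$ repairs this, since $M$ is chosen before the covering scale is sent to zero. (The Fubini exchange you invoke cannot collapse the sum: even for disjoint balls, $\sup_y \sum_j r_j^\alpha |y-x_j|^{-\alpha}$ grows with the cardinality of the family, so the ``combinatorial factor'' you concede really is unbounded.) The paper truncates instead at a \emph{fixed} spatial scale $1/N$: for a ball of radius $r < \eps/N$, the tail is $\lesssim \vartheta(\tfrac{1}{Nr})\,|\mu|(\R^d) = r^\alpha\,\delta(\eps)$, where $\delta(\eps) = |\mu|(\R^d)\sup_{r \le \eps/N} \vartheta(\tfrac{1}{Nr})/r^\alpha \to 0$ in the $o$-case (and stays bounded in the $O$-case). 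The extracted factor $r^\alpha$ is exactly what makes the total tail over the cover $\lesssim a\,\delta(\eps)$, and hence absorbable.

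Your covering step is also unavailable. You posit a \emph{disjoint} family $\{B_{r_j}(x_j)\}$ that is simultaneously $|\mu|$-exhaustive, $|\mu|(A\setminus \bigcup_j B_{r_j}(x_j))=0$, and $\mathcal{H}^\alpha$-efficient, $\sum_j r_j^\alpha \lesssim \mathcal{H}^\alpha(A)$. When $\mathcal{H}^\alpha(A) = 0$ but $|\mu|(A) > 0$ --- which is exactly the situation the lemma is designed to exclude --- the second condition forces every radius to vanish and then the first cannot hold, so you are assuming the conclusion. The paper avoids this circularity by not requiring the whole cover to be disjoint: Lemma~\ref{pok1} provides a cover with radii $\le \eps/N$ and $\sum r_i^\alpha \lesssim a$ that is partitioned into a \emph{bounded} number $\theta(d)$ of disjoint subfamilies $\mathfrak{B}^j$, the hypothesis is applied once to each, and the result $\sum_{j=1}^{\theta(d)} g(Ca) \lesssim g(a)$ is harmless because $\theta(d)$ and $C$ are absolute constants and $g$ is regular.
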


The proofs of lemmas~\ref{teor1} and~\ref{teor2} are based on new covering lemmas, which may be interesting in themselves. They deal with a simple notion that seems to be important for working with local properties of signed measures.
\begin{Def}\label{super}
A family~$\mathfrak{B}$ of open Euclidean balls is called a supercovering of a set~$A\subset \R^d$, provided
\eq{\label{superpokr}
A\subset\!\!\!\bigcup\limits_{B_{r_j}(x_j)\in\mathfrak{B}}\!\!\! B_{\frac{r_j}{3}}(x_j).
}
\end{Def}

\begin{Le}\label{super1}
Let~$\alpha \in [0,d]$. There exist the constants~$q(\alpha) \in (0,1)$ and~$C(d,\alpha)>0$ such that for any compact set~$K\subset\mathbb{R}^d$,~$\mathcal{H}^{\alpha}(K)<a$, and any~$\eps > 0$ there exists a finite supercovering~$\mathfrak{B}$ satisfying the requirements

\begin{enumerate}[1)]
\item the center of any ball of~$\mathfrak{B}$ lies inside~$K$ and its radius does not exceed~$\varepsilon$,

\item there exists a natural number $N$ such that~$\mathfrak{B}$ may be split into~$N$ disjoint subfamilies~$\mathfrak{B}^j$ satisfying the bound
\eq{
\sum\limits_{B_{r_i}(x_i)\in\mathfrak{B}^j}r_i^{\alpha}\leqslant Cq^ja, \ \ \  \ \  \ j=1,\ 2,\ \dots ,\ N.
}
\end{enumerate}
\end{Le}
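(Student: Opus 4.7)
The plan is to construct $\mathfrak{B}$ iteratively as a disjoint union $\bigsqcup_{j=1}^{N} \mathfrak{B}^j$ by peeling off layers from a decreasing sequence of compact residuals $K = K_0 \supset K_1 \supset \cdots \supset K_N$, where $K_j := K_{j-1} \setminus \bigcup_{B_{r_i}(x_i) \in \mathfrak{B}^j} B_{r_i/3}(x_i)$ denotes what remains uncovered by the first $j$ layers of the supercovering. The central inductive step will take a compact set $K' \subset K$ with $\mathcal{H}^{\alpha}(K') \leq A$ and produce a disjoint family $\mathfrak{B}^j$ of open balls centered in $K'$, with radii at most $\varepsilon$, satisfying $\sum_{B_{r_i}(x_i) \in \mathfrak{B}^j} r_i^{\alpha} \leq C' A$ and $\mathcal{H}^{\alpha}(K' \setminus \bigcup_{i} B_{r_i/3}(x_i)) \leq q A$, for constants $C'$ and $q \in (0,1)$ depending only on $d$ and $\alpha$. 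Iterating the step yields $\mathcal{H}^{\alpha}(K_{j-1}) \leq q^{j-1} a$ and hence $\sum_{\mathfrak{B}^j} r_i^{\alpha} \leq C' q^{j-1} a$, which is precisely the required geometric decay with $C = C'/q$.

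For the inductive step I start from a nearly optimal Hausdorff cover of $K'$ by small balls $\{B_{\rho_k}(y_k)\}$ with $y_k \in K'$ and $\sum_k \rho_k^{\alpha} \leq A(1 + o(1))$, and extract a Vitali disjoint subfamily whose $3$-dilations cover $K'$. The main difficulty is that the supercovering condition demands the $1/3$-dilations---not the $3$-dilations---to cover $K'$, which is incompatible with disjointness at a single scale. My plan to bridge this gap is a scale decomposition: split the Vitali family by dyadic radii and use that within each fixed scale the $9$-dilations of a disjoint family of balls of comparable radii have overlap multiplicity bounded by a constant depending only on $d$ (a direct volume packing estimate). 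A chromatic colouring within each scale, followed by a pigeonhole selection across scales, produces a disjoint subfamily whose $1/3$-dilations sweep out a fixed $\mathcal{H}^{\alpha}$-fraction of $K'$, yielding the inductive step with explicit $q = q(\alpha)$ and $C' = C'(d, \alpha)$.

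Termination and finiteness of $N$ follow from compactness of $K$: after enough iterations $\mathcal{H}^{\alpha}(K_N)$ is so small that the remaining set is covered by a single additional disjoint family, whose $\alpha$-sum is absorbed into the bound $C q^N a$. The genuine combinatorial obstacle in the whole argument is reconciling the supercovering inflation factor $1/3$ with pairwise disjointness of the balls at each scale in the inductive step; the outer iteration and termination are routine bookkeeping once that step is secured.
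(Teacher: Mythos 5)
Your plan hinges on an inductive step that you describe but do not actually establish, and I believe this step---not the outer bookkeeping---is where the whole difficulty of the lemma lives. You claim that from a nearly optimal cover of $K'$ one can, after Vitali selection, a dyadic-scale decomposition, a colouring within each scale, and a pigeonhole over scales, extract a \emph{disjoint} family $\{B_{r_i}(x_i)\}$ with $x_i\in K'$, $\sum r_i^\alpha\lesssim \mathcal H^\alpha(K')$, whose $1/3$-dilations capture a uniform positive $\mathcal H^\alpha$-fraction of $K'$. The obstruction is what one might call annular escape: for a ball $B_{r}(y)$ with $y\in K'$, the quantity $\mathcal H^\alpha(K'\cap B_{r/3}(y))$ has no a priori lower bound in terms of $\mathcal H^\alpha(K'\cap B_{3r}(y))$ --- the measure of $K'$ can live almost entirely in the annulus $B_{3r}(y)\setminus B_{r/3}(y)$, and the Vitali selection gives you no control on where inside each $3$-dilated ball the measure sits. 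Bounded overlap of $9$-dilations at a fixed dyadic scale, and a colouring into $O_d(1)$ classes, lets you make those dilations disjoint; but the subsequent ``pigeonhole across scales'' runs over an unbounded number of scales, so it cannot produce a uniform fraction $q<1$ independent of $K'$. (The lower density theorem for $\mathcal H^\alpha$ only asserts that a.e.\ point has good density at \emph{some} arbitrarily small radius; the radii in your Vitali family are forced on you by the cover, not chosen by density, so you cannot invoke it here.) Nothing in the sketch closes this gap.

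The termination is also not ``routine bookkeeping.'' Having $\mathcal H^\alpha(K_j)\le q^j a\to 0$ does not make $K_j$ eventually empty; the nested compacts can have nonempty intersection $K_\infty$ with $\mathcal H^\alpha(K_\infty)=0$ (e.g.\ a segment when $\alpha>1$), which your countable union of $1/3$-dilations, being open, would then fail to cover. And the claim that the residual ``is covered by a single additional disjoint family'' is false in general: the $1/3$-dilations of a disjoint family are themselves pairwise disjoint, so a single such family cannot supercover any residual with a nontrivial connected piece. One can patch this by stopping after $N$ steps and finishing with the $\theta(d)$ disjoint families from the Morse covering lemma applied to a fine cover of $K_N$ (each contributing $\lesssim q^N a$), but that still presupposes the unproven inductive step.

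The paper resolves the $1/3$-versus-disjointness tension by a genuinely different mechanism: Lemma~\ref{opti} produces a \emph{finite optimal} covering $\mathfrak B_0$ (centers in $K$, radii $\le\eps$) minimizing $\sum r_i^\alpha$. Finiteness of $N$ then comes for free. The geometric decay $q=1-9^{-\alpha}$ is derived not by showing that each layer captures a fixed fraction of measure, but by a competitor-swap: if, after a Vitali step, the yet-uncovered family $\mathfrak B^{>j}$ were too heavy, one could replace the corresponding balls of $\mathfrak B_0$ by the $3$-dilates of the just-selected $\mathfrak B^{j}$ and obtain a cheaper admissible cover, contradicting optimality. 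This is the key idea your proposal is missing; without either that extremal argument or a proof of your fraction-peeling step, the construction does not go through.
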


Rem. Lemma~\ref{super1} does not provide any control on $N$.

\begin{Le}\label{super2}
Let~$\alpha\in (d-1,d]$. There exist the constants~$C_1(d)>0$ and~$C_2(d,\alpha)>0$ such that for any bounded set~$A\subset\mathbb{R}^d$,~$\mathcal{H}^{\alpha}(A)<a$, and any~$\eps > 0$, there exists a supercovering~$\mathfrak{B}$, satisfying the requirements
\begin{enumerate}[1)]
\item the center of any ball of~$\mathfrak{B}$ lies inside~$K$ and its radius does not exceed~$\varepsilon$,

\item the family~$\mathfrak{B}$ may be split into~$C_1$ disjoint subfamilies~$\mathfrak{B}^j$ such that
\eq{
\sum\limits_{B_{r_i}(x_i)\in\mathfrak{B}^j}r_i^{\alpha}\leqslant C_2a,  \ \ \  \ \  \ j=1,\ 2,\ \dots ,\ C_1.
}
\end{enumerate}
\end{Le}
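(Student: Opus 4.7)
The strategy is to produce an initial cover of $A$ from the definition of $\mathcal{H}^\alpha$, apply the Besicovitch covering theorem to an appropriately chosen family of balls centered in $A$, and then invoke the hypothesis $\alpha>d-1$ to control the multiplicity of the resulting outer balls.

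By the definition of the $\alpha$-Hausdorff measure at scale $\delta=\eps/3$, one obtains a cover $\{B_{t_i}(y_i)\}_i$ of $A$ with $y_i\in A$, $t_i\leq \eps/3$, and $\sum_i t_i^\alpha\lesssim a$. For each $x\in A$ pick an index $i(x)$ with $x\in B_{t_{i(x)}}(y_{i(x)})$, and set $\rho(x):=t_{i(x)}$. Applying the Besicovitch covering theorem to the family $\{B_{\rho(x)}(x):x\in A\}$ produces a countable $\{x_j\}_j\subset A$ with $A\subset \bigcup_j B_{\rho(x_j)}(x_j)$ and such that $\{B_{\rho(x_j)}(x_j)\}_j$ partitions into $N(d)$ subfamilies of pairwise disjoint balls. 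Putting $r_j:=3\rho(x_j)$ and $\mathfrak{B}:=\{B_{r_j}(x_j)\}_j$, one has $B_{r_j/3}(x_j)=B_{\rho(x_j)}(x_j)$, so $\mathfrak{B}$ is a supercovering of $A$ whose centers lie in $A$ and whose radii are at most $\eps$.

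A short double-counting estimate bounds the total $\alpha$-cost. If two distinct $x_j,x_k$ within the same Besicovitch subfamily shared an index $i(x_j)=i(x_k)=i$, they would both belong to $B_{t_i}(y_i)$ (forcing $|x_j-x_k|<2t_i$), yet the disjointness of their inner Besicovitch balls of radius $t_i$ would require $|x_j-x_k|\geq 2t_i$---a contradiction. Consequently each index $i$ is chosen by at most $N(d)$ of the $x_j$'s, so $\sum_j r_j^\alpha = 3^\alpha\sum_j\rho(x_j)^\alpha \leq 3^\alpha N(d)\sum_i t_i^\alpha \lesssim a$.

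It remains to partition $\mathfrak{B}$ into $C_1=C_1(d)$ subfamilies of pairwise disjoint balls, which amounts to bounding the multiplicity of $\{B_{3\rho(x_j)}(x_j)\}_j$ by a dimensional constant. This is where the hypothesis $\alpha>d-1$ becomes decisive: a packing argument shows that if too many outer balls meet at a point $z$, their inner counterparts---pairwise disjoint within each Besicovitch subfamily---must cluster around $z$ in a configuration whose $\alpha$-mass, in the range $\alpha>d-1$, would exceed the total $\alpha$-cost bound just established. A greedy coloring based on this multiplicity bound then produces the splitting, and each resulting subfamily inherits $\sum r_j^\alpha\leq C_2 a$. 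The main obstacle of the whole argument is precisely this final packing estimate: one must convert the threshold $\alpha>d-1$ into a quantitative, dimension-only bound on the number of outer balls concentrating at a point. In the complementary regime $\alpha\leq d-1$ such a bound fails---as simple one-dimensional examples of disjoint balls with controlled $\alpha$-mass but unbounded triple-multiplicity show---and one recovers only the weaker geometrically-decaying statement of Lemma \ref{super1}.
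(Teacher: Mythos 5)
Your construction reaches an impasse at exactly the step you flag as "the main obstacle," and the obstacle is not merely technical: the multiplicity bound you need is false. After the Besicovitch step you have pairwise disjoint inner balls $B_{\rho(x_j)}(x_j)$ (within each Besicovitch subfamily), and you propose that the tripled balls $B_{3\rho(x_j)}(x_j)$ admit a splitting into a dimension-only number of pairwise disjoint subfamilies. But the tripled family can contain an arbitrarily large clique. Take, for instance, $x_k=(4^{-k},0,\dots,0)$ and $\rho(x_k)=4^{-k}/2$ for $k=1,\dots,n$: the inner balls $B_{\rho(x_k)}(x_k)$ are pairwise disjoint, the total $\alpha$-cost $\sum_k\rho(x_k)^\alpha$ stays bounded uniformly in $n$ for any $\alpha>0$, yet every tripled ball $B_{3\rho(x_k)}(x_k)$ contains the origin. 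Hence the tripled family cannot be partitioned into fewer than $n$ subfamilies of pairwise disjoint balls. The packing heuristic you invoke works only when the contributing balls live at a single scale; it says nothing about multi-scale clustering, and the hypothesis $\alpha>d-1$ does not rescue it — the example above works even at $\alpha=d$. So the bound you want is not a gap to be filled but a statement that fails, and the approach as written cannot complete.

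The paper circumvents this precisely by refusing to supercover the annulus $B_r(x)\setminus B_{r/3}(x)$ with the single inflated ball $B_{3r}(x)$. Instead, starting from the bounded-overlap cover supplied by Lemmas~\ref{Morse1} and~\ref{pok1}, each annulus is decomposed into concentric rings $F^j$ of geometrically decaying width, and a separated net of points of $A$ in each ring is covered by balls of the corresponding (small) scale. At a fixed ring index $j$ a volume/packing argument (inequality~\eqref{kolsa3}) gives a genuine multiplicity bound of order $q^{(1-d)j}$, which is legitimate because all the balls there have comparable radius $\sim rq^j$. The hypothesis $\alpha>d-1$ then enters not as a clique bound but to make the geometric series $\sum_j q^{(1-d)j}(rq^j)^\alpha=r^\alpha\sum_j q^{j(\alpha-d+1)}$ converge, which is what keeps the $\alpha$-cost of each disjoint subfamily controlled. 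Your first two steps (the Besicovitch cover and the double-counting estimate $\sum_j\rho(x_j)^\alpha\lesssim a$) are fine and in the right spirit, but to go from a bounded-overlap cover to a supercovering you need the ring refinement, not a uniform triple inflation.
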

The constants in the lemmas admit explicit expressions. The proofs of lemmas~\ref{super1} and~\ref{super2} are presented in Section~\ref{S2}. The proofs of lemmas~\ref{teor1} and~\ref{teor2} as well as Theorem~\ref{Stol} may be found in Section~\ref{S3}. The last section contains some generalizations of these results and some examples.

 \section{Constructions of fine coverings}\label{S2}
 We need some tools.

 		
 		
 		
 		
	


\begin{Le}\label{Morse1}
	For any $d\in\mathbb{N}$ there exists a constant $\theta(d)$ such that the following holds. Let $A$ be a bounded subset of $\mathbb{R}^d$ and let $\mathfrak{B}$ be a family of balls in~$\mathbb{R}^d$.
	Assume that
	\eq{\forall x\in A \ \ \ \exists B_r(y)\in \mathfrak{B} \ \ \  such \ that \ x\in B_{\frac{r}{3}}(y).}
	Then there exists a subfamily $\mathfrak{B}'\subset\mathfrak{B}$ satisfying the requirements
	\begin{enumerate}[1)]
		\item $A\subset \underset{B_{r_i}(x_i)\in\mathfrak{B}'}{\bigcup}B_{r_i}(x_i)$,
		
		\item the family $\mathfrak{B}'$ may be split into $\theta$ disjoint subfamilies.
	\end{enumerate}
\end{Le}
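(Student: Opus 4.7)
The statement is a Morse-type extension of the classical Besicovitch covering theorem: balls of $\mathfrak{B}$ are not centered at $A$-points, but the condition $x\in B_{r/3}(y)$ provides enough slack to mimic the classical argument. The plan is a greedy selection followed by degree-based coloring.

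For each $x\in A$, fix a ball $B_x=B(y_x,r_x)\in\mathfrak{B}$ with $x\in B(y_x,r_x/3)$. If some $r_x$ exceeds $2\diam(A)$ then $B_x\supset A$ and we are done with $\theta=1$; so assume the radii are uniformly bounded. Construct $\mathfrak{B}'=\{B_{x_n}\}$ greedily: with $A_n=A\setminus\bigcup_{j<n}B_{x_j}$, pick $x_n\in A_n$ whose radius $r_{x_n}$ is at least half of $\sup_{x\in A_n}r_x$. Since $r_{x_n}\to 0$ (otherwise the greedy fails to make progress), every point of $A$ is eventually covered, hence $A\subset\bigcup_n B_{x_n}$.

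Partition $\mathfrak{B}'$ by greedy coloring in the order of selection: assign to $B_{x_n}$ the smallest color not used by any predecessor it meets. It suffices to show that each $B_{x_n}$ meets at most $\theta(d)-1$ of its predecessors. Combining the greedy non-coverage $|x_n-y_{x_j}|\ge r_{x_j}$, the Morse condition $|x_j-y_{x_j}|<r_{x_j}/3$, and the greedy radius bound $r_{x_j}\ge r_{x_n}/2$, one deduces $|x_n-x_j|>\tfrac{2}{3}r_{x_j}$ together with the intersection upper bound $|x_n-x_j|<\tfrac{4}{3}(r_{x_j}+r_{x_n})$, plus the pairwise analogue $|x_j-x_{j'}|>\tfrac{2}{3}r_{x_{\min(j,j')}}$. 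The $A$-points $\{x_j\}$ therefore sit in an annular region around $x_n$ with pairwise separation comparable to the smaller radius, and a Besicovitch-style packing count, organised scale by scale in the ratio $r_{x_j}/r_{x_n}$, returns a bound depending only on $d$.

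The main obstacle is this final packing step. In the classical Besicovitch theorem the $A$-points are themselves the ball centers, so a single angular packing at $x_n$ suffices; here the centers $y_{x_j}$ are shifted by up to $r_{x_j}/3$, which forces a careful combination of per-scale counts that must not accumulate a logarithmic loss from the unbounded ratio $r_{x_j}/r_{x_n}$. The Morse factor $1/3$ is precisely what keeps the relevant geometric constants strictly positive, so that the count stabilises at a dimensional $\theta(d)$.
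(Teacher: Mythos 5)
The paper does not actually prove Lemma~\ref{Morse1}; it imports it verbatim as a special case of Morse's covering theorem from de Guzm\'an's book. So there is no in-paper proof to compare against, and your attempt has to stand on its own.

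The framework you set up is the right one and matches the classical Besicovitch--Morse strategy: greedy selection picking at each stage an $A$-point whose ball radius is within a factor $2$ of the supremum over the remaining points, then a greedy coloring in order of selection. The preliminary estimates you derive are all correct: the reduction to uniformly bounded radii via the observation that $r_x>2\diam(A)$ forces $A\subset B_x$; the fact that $r_{x_n}\to 0$ implies coverage; the greedy bound $r_{x_j}\ge r_{x_n}/2$ for $j<n$; and the separations $|x_n-x_j|>\tfrac{2}{3}r_{x_j}$, $|x_n-x_j|<\tfrac{4}{3}(r_{x_j}+r_{x_n})$, $|x_j-x_{j'}|>\tfrac{2}{3}r_{x_{\min(j,j')}}$.

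However, the step that actually proves the theorem---that no $B_{x_n}$ can meet more than $\theta(d)-1$ of its predecessors---is not carried out, and you yourself flag it as the main obstacle. Worse, the route you gesture at would not close the gap: you propose organizing the count ``scale by scale in the ratio $r_{x_j}/r_{x_n}$'' and then worrying that this ``must not accumulate a logarithmic loss.'' A genuine per-scale count \emph{does} accumulate exactly such a loss, and nothing in your sketch prevents it. The actual Besicovitch/Morse argument is not per-scale. One splits the intersecting predecessors into two groups: those with $r_{x_j}\le K r_{x_n}$ for a fixed dimensional $K$, which are handled by a single volume-packing bound (they lie in $B(x_n,C r_{x_n})$ and are pairwise $\gtrsim r_{x_n}$-separated), and those with $r_{x_j}>K r_{x_n}$, for which one shows that the \emph{directions} $\frac{x_j-x_n}{|x_j-x_n|}$ are pairwise separated by a fixed angle $\theta_0(d)>0$, using the fact that if $j<j'$ then $x_{j'}$ is not covered by $B_{x_j}$ while $x_n$ lies just barely outside $B_{x_j}$. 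That angular dichotomy bounds the number of large predecessors by a sphere-packing constant \emph{uniformly over all scales}, which is precisely what kills the log. The Morse displacement $|x_j-y_{x_j}|<r_{x_j}/3$ only degrades the angular constant $\theta_0$ by a bounded amount; it does not change the structure of the argument, contrary to your claim that it ``forces a careful combination of per-scale counts.'' As written, your proposal omits the angular-separation lemma entirely, which is the whole content of the theorem, so the proof is incomplete.
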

Lemma~\ref{Morse1} is a particular case of the Morse covering theorem (see~p.6~in~\cite{Guzman1975}).







\begin{Le}\label{pok1}
	Let $\alpha\in[0,d]$. There exist the constants $C(\alpha)$ and $\theta(d)$ such that for any bounded $A\subset\mathbb{R}^d$, $\mathcal{H}^{\alpha}(A)<a$ (for some constant $a$), and any $\varepsilon>0$ there exists a family of balls $\mathfrak{B}$ such that
	\begin{enumerate}[1)]
		\item $\mathfrak{B}$ is a covering of $A$,
		\item the center of any ball in~$\mathfrak{B}$ lies in~$A$ and its radius does not exceed~$\varepsilon$,
		
		\item the family~$\mathfrak{B}$ may be split into~$\theta$ disjoint subfamilies~$\mathfrak{B}^j$ such that
		\eq{
			\sum\limits_{B_{r_i}(x_i)\in\mathfrak{B}^j}r_i^{\alpha}\leqslant Ca.
		}
	\end{enumerate}
\end{Le}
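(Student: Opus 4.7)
The plan is to derive the lemma by combining the definition of Hausdorff measure with the Morse-type covering result of Lemma~\ref{Morse1}. The rough idea is that the definition of $\mathcal{H}^{\alpha}$ supplies a countable cover of $A$ by sets of small diameter and controlled $\alpha$-sum; I will replace those sets by balls centered in $A$, enlarge each by a factor of $3$ so that the Vitali-type hypothesis of Lemma~\ref{Morse1} is satisfied, and then read off the $\theta$ disjoint subfamilies.

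First, since $\mathcal{H}^{\alpha}_{\varepsilon/3}(A)\leq \mathcal{H}^{\alpha}(A)<a$, I may select a countable cover $\{U_i\}$ of $A$ with $\operatorname{diam} U_i\leq \varepsilon/3$ and $\sum_i(\operatorname{diam} U_i)^{\alpha}<a$. Throwing away sets that do not meet $A$ and picking $x_i\in U_i\cap A$, I set $r_i:=\operatorname{diam} U_i$, so that $U_i\subset B_{r_i}(x_i)$. The resulting family $\{B_{r_i}(x_i)\}$ covers $A$ by balls with centers in $A$, radii at most $\varepsilon/3$, and $\sum_i r_i^{\alpha}<a$.

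Next, I apply Lemma~\ref{Morse1} to the enlarged family $\mathfrak{B}_0:=\{B_{3r_i}(x_i)\}$. For each $x\in A$, the original cover provides some $i$ with $x\in B_{r_i}(x_i)=B_{(3r_i)/3}(x_i)$, which is exactly the hypothesis of Lemma~\ref{Morse1}. That lemma yields a subfamily $\mathfrak{B}'\subset\mathfrak{B}_0$ which still covers $A$ and splits into $\theta(d)$ subfamilies $\mathfrak{B}^j$ each consisting of pairwise disjoint balls. The balls of $\mathfrak{B}^j$ have radii bounded by $3\cdot(\varepsilon/3)=\varepsilon$ and centers in $A$, while
\[
\sum_{B_{3r_i}(x_i)\in\mathfrak{B}^j}(3r_i)^{\alpha}\;\leq\;3^{\alpha}\sum_i r_i^{\alpha}\;<\;3^{\alpha}\,a,
\]
so $\mathfrak{B}'$ witnesses the lemma with $\theta=\theta(d)$ inherited from Lemma~\ref{Morse1} and $C(\alpha)=3^{\alpha}$.

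I do not foresee a substantive obstacle: the argument is essentially a routine reduction to Lemma~\ref{Morse1}. The only technical point that calls for care is the bookkeeping of scales, namely starting at level $\varepsilon/3$ in the Hausdorff-measure covering so that the threefold dilation required to set up the hypothesis of Lemma~\ref{Morse1} leaves the radii bounded by $\varepsilon$, and noting that this same dilation costs only a factor of $3^{\alpha}$ in the $\alpha$-sum.
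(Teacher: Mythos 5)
Your proof is correct and follows essentially the same route as the paper: both arguments take a Hausdorff cover of $A$ with small diameters and controlled $\alpha$-sum, replace each set by a ball of radius $3\cdot\operatorname{diam}$ centered at a point of $A$ inside it, apply Lemma~\ref{Morse1} to extract $\theta(d)$ disjoint subfamilies, and absorb the factor of $3$ into $C(\alpha)=3^{\alpha}$. The only (immaterial) difference is the choice of initial diameter threshold ($\varepsilon/3$ versus the paper's $\varepsilon/1000$).
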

\begin{proof}
	Since $\mathcal{H}^{\alpha}(A)<a$ there exists a family $\mathfrak{A}$ such that $\mathfrak{A}$ is a covering $A$, for any $D\in\mathfrak{A}$ we have $\diam(D)<\frac{\eps}{1000}$, and
	\eq{\sum_{D_i\in\mathfrak{A}}\diam(D_i)^{\alpha}<a.}
	The family $\mathfrak{B}_0$ is defined in the following way: for any $D\in \mathfrak{A}$ we choose a point $x\in D\cap A$ and put the ball $B_{3\diam(D)}(x)$ into $\mathfrak{B}_0$. The family $\mathfrak{B}_0$ and the set $A$ satisfy the conditions of Lemma~\ref{Morse1}. Let $\mathfrak{B}$ be the subfamily of $\mathfrak{B}_0$ provided by Lemma~\ref{Morse1}. The family $\mathfrak{B}$ splits into $\theta(d)$ disjoint subfamilies $\mathfrak{B}^j$ that satisfy the estimate:
	\eq{
	\sum_{B_{r_i}(x_i)\in\mathfrak{B}^j}r_i^{\alpha}\leqslant\sum_{B_{r_i}(x_i)\in\mathfrak{B}_0}r_i^{\alpha}\leqslant\sum_{D_i\in\mathfrak{A}}(3\diam(D_i))^{\alpha}\leqslant 3^{\alpha}a.
	} 
\end{proof}

\begin{Le}\label{opti}
	Let $K\subset\mathbb{R}^d$ be a compact set such that $\mathcal{H}^{\alpha}(K)<a$ (for some constant $a$) and let $\eps>0$. There exists a constant~$M$ and a family of closed balls $\mathfrak{B}$ such that 
		\begin{enumerate}[1)]
		\item $\mathfrak{B}$ is a covering of $K$,
		
		\item the center of any ball in~$\mathfrak{B}$ lies inside~$K$ and its radius does not exceed~$\varepsilon$,
		
		\item $|\mathfrak{B}|=M$ and
		\eq{\sum_{\overset{\_}{B}_{r_i}(x_i)\in\mathfrak{B}}r_i^{\alpha}<a}
		
		\item For any family $\mathfrak{B}'$ that satisfies 1), 2), 3) the inequality \eq{\sum_{\overset{\_}{B}_{r_i}(x_i)\in\mathfrak{B}}r_i^{\alpha}\leqslant\sum_{\overset{\_}{B}_{r_i}(x_i)\in\mathfrak{B}'}r_i^{\alpha}}
		is true.
		
	\end{enumerate}
\end{Le}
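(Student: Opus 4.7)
The plan is a compactness argument on the finite-dimensional parameter space of $M$-tuples of closed balls centered in $K$.

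First I would produce an initial finite covering of $K$ by closed balls centered in $K$, with radii at most $\eps$, and with total $\alpha$-weight \emph{strictly} less than $a$. Pick $a'$ with $\mathcal{H}^{\alpha}(K) < a' < a$; the definition of $\mathcal{H}^{\alpha}$ furnishes a countable family $\{D_i\}$ covering $K$ with $\diam D_i \leqslant \eps/2$ and $\sum \diam(D_i)^{\alpha} < a'$. Discarding sets disjoint from $K$ and enlarging each remaining $D_i$ to an open $U_i \supset D_i$ whose diameter exceeds $\diam D_i$ by an arbitrarily small amount produces an open cover of $K$ with $\sum \diam(U_i)^{\alpha} < a$. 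Compactness of $K$ extracts a finite subcover, and replacing each $U_i$ by $\overline{B}_{\diam U_i}(x_i)$ for some $x_i \in U_i \cap K$ yields a finite family $\mathfrak{B}_0$ satisfying conditions 1), 2), and $\sum r_i^{\alpha} < a$. Set $M := |\mathfrak{B}_0|$.

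Next I would consider $\Omega = \{(x_1,r_1,\ldots,x_M,r_M) \in (K \times [0,\eps])^M : \bigcup_{i=1}^M \overline{B}_{r_i}(x_i) \supset K\}$ and minimize the continuous functional $(x_i,r_i) \mapsto \sum_i r_i^{\alpha}$ on it. The ambient product space is compact, and the key observation is that $\Omega$ is closed: given a convergent sequence of covering tuples $(x_i^n, r_i^n) \to (x_i, r_i)$ and a point $y \in K$, pigeonholing over the indices $i$ for which $y \in \overline{B}_{r_i^n}(x_i^n)$ yields some $i^{*}$ realized along a subsequence $n_k$, and passing to the limit in $|y - x_{i^{*}}^{n_k}| \leqslant r_{i^{*}}^{n_k}$ places $y$ in $\overline{B}_{r_{i^{*}}}(x_{i^{*}})$. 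Since $\Omega$ is nonempty by the previous step, the minimum is attained at some tuple $\mathfrak{B}$; its $\alpha$-sum does not exceed that of $\mathfrak{B}_0$, hence is still strictly below $a$, giving condition 3); condition 4) is immediate from minimality, since every competitor satisfying 1), 2), 3) lies in $\Omega$.

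The step requiring most care is the first one: maintaining the \emph{strict} inequality $\sum r_i^{\alpha} < a$ while passing from a countable Hausdorff cover of $K$ to a \emph{finite} cover by closed balls centered in $K$. The interpolating constant $a'$ together with the small open enlargement of each $D_i$ is what makes this go through; the minimization itself is essentially automatic once closedness of $\Omega$ is verified.
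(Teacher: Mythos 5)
Your proposal is correct and matches the paper's argument step for step: produce a finite covering by closed balls centered in $K$ with total $\alpha$-weight strictly below $a$ by passing from a near-optimal Hausdorff cover to a slightly enlarged open cover and using compactness of $K$, fix $M$, then minimize $\sum r_i^\alpha$ over the compact parameter space of $M$-tuples of centers in $K$ and radii in $[0,\eps]$ subject to the covering constraint. The only cosmetic differences are that the paper enlarges by a uniform multiplicative factor $1+\delta$ where you enlarge each set additively, and that you spell out the closedness of the constraint set via the pigeonhole argument, which the paper states as ``easy to see''.
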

 The constant $M$ depends on $K$ and $\varepsilon$. The 
 set $\overset{\_}{D}$ is the closure of $D$.

\begin{proof}
	 	Since $\mathcal{H}^{\alpha}(A)<a$ there exists a family $\mathfrak{A}$ such that $\mathfrak{A}$ is a covering of $A$, for any $D\in\mathfrak{A} \ $ we have $\diam(D)<\frac{\eps}{1000}$, and
	 \eq{\sum_{D_i\in\mathfrak{A}}\diam(D_i)^{\alpha}<a.}
	 The family $\mathfrak{B}_0$ is defined in the following way: for any $D\in \mathfrak{A}$ we choose a point $x\in D\cap A$ and put the ball $\overset{\_}{B}_{\diam(D)}(x)$ into $\mathfrak{B}_0$. Note that
	 \eq{\sum_{\overset{\_}{B}_{r_i}(x_i)\in\mathfrak{B}_0}r_i^{\alpha}\leqslant\sum_{D_i\in\mathfrak{A}}\diam(D_i)^{\alpha}<a.} 
	 There exists $\delta>0$ such that
	 \eq{\sum_{\overset{\_}{B}_{r_i}(x_i)\in\mathfrak{B}_0}((1+\delta)r_i)^{\alpha}<a.} 
	 If we multiply the radii of the balls $\overset{\_}{B}\in\mathfrak{B}_0$ by $1+\delta$ and make the balls open, then we will get an open covering of  the compact set $K$. Let $\mathfrak{B}_1$ be a finite subcovering of this open covering. Note that we still have 
	 \eq{\sum_{B_{r_i}(x_i)\in\mathfrak{B}_1}r_i^{\alpha}\leqslant\sum_{\overset{\_}{B}_{r_i}(x_i)\in\mathfrak{B}_0}((1+\delta)r_i)^{\alpha}<a.}
	 
	 So if we close the balls in the family $\mathfrak{B}_1$ and take $M=|\mathfrak{B}_1|$, we will have a family that satisfies conditions 1), 2), 3). Now we will prove that there exists an optimal family of this kind. The set $S\subset(\mathbb{R}^d)^M\times\mathbb{R}^M$ is defined by the formula
	 \eq{S=\Big\{(x_1,x_2,\dots,x_M,r_1,\dots,r_M)\mid \ x_j\in K,\ r_j\in[0,\eps], \ K\subset \cup \overset{\_}{B}_{r_j}(x_j) \Big\}.}
	It is easy to see that $S$ is compact.
	Thus, there exists a point in $S$ that minimizes the continuous function
	\eq{L(x_1,x_2,\dots,x_M,r_1,\dots,r_M)=\sum_{j=1}^M r_j^{\alpha}.}
	This point corresponds to the desired optimal family $\mathfrak{B}$.
	
\end{proof}

\begin{proof}[\bf{Proof of Lemma~\ref{super1}.}]
	
	\
	
Let $\mathfrak{B}_0$ be a family of closed balls constructed in Lemma~\ref{opti}. The family $\mathfrak{B}$ will be some transformation of the family $\mathfrak{B}_0$. First, we split $\mathfrak{B}_0$ into families $\mathfrak{B}_1$ and $\mathfrak{B}_2$. The family $\mathfrak{B}_1$ consists of balls with radii not less than $\frac{\eps}{9}$, and the family $\mathfrak{B}_2$ consists of balls with radii less than $\frac{\eps}{9}$. We transform the families $\mathfrak{B}_1$ and $\mathfrak{B}_2$ into the families $\mathfrak{C}_1$ and $\mathfrak{C}_2$ and set $\mathfrak{B}=\mathfrak{C}_1\cup\mathfrak{C}_2$. Let $K_0$ be defined by the following formula:
\eq{K_0=K\cap\underset{\overset{\_}{B}_{r_i}(x_i)\in\mathfrak{B}_1}{\bigcup}\overset{\_}{B}_{r_i}(x_i).}
Let $E$ be a maximal $\frac{\eps}{3}$-separated subset of $K_0$. Let $\mathfrak{C}_1$ be defined by the formula
\eq{\mathfrak{C}_1=\{B_{\eps}(x)| \ x\in E\}.}
The family $\mathfrak{C}_1$ is a supercovering of the set $K_0$. The family $\mathfrak{C}_2$ is defined by the formula
\eq{\mathfrak{C}_2=\{B_{3r}(x)| \ \overset{\_}{B}_r(x)\in \mathfrak{B}_2\}.}

The family $\mathfrak{C}_2$ is a supercovering of the set $K\setminus K_0$. We will split $\mathfrak{C}_1$ and $\mathfrak{C}_2$ into subfamilies $\mathfrak{B}^j$ and this will complete the proof. It is clear that $\mathfrak{C}_1$ can be split into disjoint subfamilies $\mathfrak{B}^1,...,\mathfrak{B}^{100^d}$ such that this complete
\eq{\sum\limits_{B_{r_i}(x_i)\in\mathfrak{B}^j}r_i^{\alpha}\leqslant\sum\limits_{B_{r_i}(x_i)\in\mathfrak{C}_1}r_i^{\alpha}=|\mathfrak{C}_1|\eps^{\alpha}=|E|\eps^{\alpha}.}
The set $E$ is an $\frac{\eps}{3}$-separated set, so $|E\cap \overset{\_}{B}_{\eps}(x)|\leqslant 100^d$ for any $x\in\mathbb{R}^d$. Since $E$ is a subset of $\underset{\overset{\_}{B}_r(x)\in\mathfrak{B}_1}{\bigcup}\overset{\_}{B}_r(x)$, we also have $|E|\leqslant 100^d|\mathfrak{B}_1|$. With these inequalities, we may continue the estimate: 
\eq{\sum\limits_{B_{r_i}(x_i)\in\mathfrak{B}^j}r_i^{\alpha}\leqslant|E|\eps^{\alpha}\leqslant100^d|\mathfrak{B}_1|\eps^{\alpha}\leqslant 100^d9^d\sum\limits_{\overset{\_}{B}_{r_i}(x_i)\in\mathfrak{B}_1}r_i^{\alpha}\leqslant 100^d9^da.}

The family $\mathfrak{C}_2$ will be split into $\mathfrak{B}^j$ for $j>100^d$. We will also define the families $\mathfrak{B}^{>j}$ inductively. Let $\mathfrak{B}^{>100^d}=\mathfrak{C}_2$. Assume we have defined family $\mathfrak{B}^{>j}$. Then by Vitali's~Lemma we can find its disjoint subfamily $\mathfrak{B}^{j+1}$ (if we multiply all radii in family $\mathfrak{B}^{j+1}$ by $3$, it will cover family $\mathfrak{B}^{>j}$) and define the family $\mathfrak{B}^{>j+1}=\mathfrak{B}^{>j}\setminus\mathfrak{B}^{j+1}$. Let $q=1-9^{-\alpha}$. We will prove the inequality 
\eq{\sum\limits_{B_{r_i}(x_i)\in\mathfrak{B}^{>j+1}}r_i^{\alpha}\leqslant q\sum\limits_{B_{r_i}(x_i)\in\mathfrak{B}^{>j}}r_i^{\alpha}.}

If we enlarge all the radii of the balls in $\mathfrak{B}^{j+1}$ 3 times, the obtained family will cover all the balls in $\mathfrak{B}^{>j+1}$. The ball $B_r(x)\in\mathfrak{B}^{>j}$  
has its analogue $\overset{\_}{B}_{\frac{r}{3}}(x)$ in $\mathfrak{B}_0$. Let the family $\mathfrak{B}'$ be defined by the formula
\eq{\mathfrak{B}'=\{\overset{\_}{B}_r(x)| \ \overset{\_}{B}_r(x)\in\mathfrak{B}_0 \ \text{and} \ B_{3r}(x)\notin\mathfrak{B}^{>j}\}\cup\{\overset{\_}{B}_{3r}(x)|B_{r}(x)\in\mathfrak{B}^j\}.} 

The family $\mathfrak{B}'$  satisfies conditions 1), 2), 3)  of Lemma~\ref{opti}, so we have

\eq{\sum_{\overset{\_}{B}_{r_i}(x_i)\in\mathfrak{B}_0}r_i^{\alpha}\leqslant\sum_{\overset{\_}{B}_{r_i}(x_i)\in\mathfrak{B}'}r_i^{\alpha}.}
  
Consequently,
\eq{\sum_{B_{r_i}(x_i)\in\mathfrak{B}^{>j}}\left(\frac{r_i}{3}\right)^{\alpha}\leqslant\sum_{B_{r_i}(x_i)\in\mathfrak{B}^j}(3r_i)^{\alpha},}
\eq{\sum_{B_{r_i}(x_i)\in\mathfrak{B}^{>j+1}}r_i^{\alpha}=\sum_{B_{r_i}(x_i)\in\mathfrak{B}^{>j}}r_i^{\alpha}-\sum_{B_{r_i}(x_i)\in\mathfrak{B}^j}r_i^{\alpha}\leqslant q\sum_{B_{r_i}(x_i)\in\mathfrak{B}^{>j}}r_i^{\alpha},}

If we iterate the last inequality, we will have

\eq{\sum_{B_{r_i}(x_i)\in\mathfrak{B}^{>j}}r_i^{\alpha}\leqslant q^{j-100^d}\sum_{B_{r_i}(x_i)\in\mathfrak{B}^{>100^d}}r_i^{\alpha}.}

So, for $j>100^d$,
\begin{multline}
\sum_{B_{r_i}(x_i)\in\mathfrak{B}^j}r_i^{\alpha}\leqslant\sum_{B_{r_i}(x_i)\in\mathfrak{B}^{>j-1}}r_i^{\alpha}\leqslant q^{j-100^d-1}\sum_{B_{r_i}(x_i)\in\mathfrak{B}^{>100^d}}r_i^{\alpha}\leqslant\\ q^{j-100^d-1}3^{\alpha}\sum_{B_{r_i}(x_i)\in\mathfrak{B}_0}r_i^{\alpha}\leqslant q^{j-100^d-1}3^{\alpha}a.
\end{multline}
Let $C=q^{-100^d-1}3^{\alpha}100^d9^d$. Then

\eq{\sum_{B_{r_i}(x_i)\in\mathfrak{B}^j}r_i^{\alpha}\leqslant Cq^{j}a.}
  
  \end{proof}
\begin{Def}
	Set $B_R(x)\setminus B_{R-r}(x)$ will be called a ring with center~$x$, radius~$r$, and size~$R$. 
\end{Def}
Let $\omega_{d-1}$ be the area of the unit sphere in $\mathbb{R}^d$ and $\pi_d$ be the volume of the unit ball in $\mathbb{R}^d$.  
\begin{Le}\label{ring}
	Let $F$ be a ring with size $R$ and radius $r$. Then the volume of $F$ does not exceed $\omega_{d-1}R^{d-1}r$.
\end{Le}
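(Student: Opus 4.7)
The plan is to compute the volume of $F$ directly from the ball volume formula and then apply a one-line estimate. In the non-degenerate case $0 \leq r \leq R$, I would write $|F| = \pi_d R^d - \pi_d(R-r)^d$ and use the identity $\omega_{d-1} = d\pi_d$ (the surface area of the unit sphere is the derivative of the volume of the unit ball) to rewrite this difference as the integral
\[
|F| \;=\; \omega_{d-1}\int_{R-r}^{R} t^{d-1}\,dt.
\]
Bounding the integrand by its maximum $R^{d-1}$ over an interval of length $r$ then yields the required estimate $|F| \leq \omega_{d-1} R^{d-1} r$.

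In the degenerate case $r > R$, the ball $B_{R-r}(x)$ is empty, so $F = B_R(x)$ has volume $\pi_d R^d = \omega_{d-1} R^d / d$, which is bounded above by $\omega_{d-1} R^{d-1} r$ since $r > R$ and $d \geq 1$. There is no genuine obstacle here: the lemma is a transparent consequence of the formula for the volume of a ball combined with the monotonicity of $t \mapsto t^{d-1}$ on the positive axis. Equivalently, one could invoke the convexity of $t \mapsto t^d$, which gives $R^d - (R-r)^d \leq d R^{d-1} r$ by the mean value theorem; this is the same computation in a slightly different guise.
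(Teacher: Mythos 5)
Your argument is correct and is essentially the same as the paper's: the paper also integrates $1$ over $F$ in spherical coordinates to get $\int_{R-r}^{R}\omega_{d-1}t^{d-1}\,dt$ and then bounds $t^{d-1}$ by $R^{d-1}$ on the interval of length $r$. Your explicit treatment of the degenerate case $r>R$ (which the paper silently skips) and the remark about $\omega_{d-1}=d\pi_d$ are harmless additions, not a different method.
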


\begin{proof}
We will integrate 1 over $F$ and make the spherical change of coordinates,
\eq{\int_{F}1dx=\int_{R-r}^{R}\omega_{d-1}t^{d-1}dt\leqslant\int_{R-r}^{R}\omega_{d-1}R^{d-1}dt=\omega_{d-1}R^{d-1}r.}	
\end{proof}
\begin{proof}[\bf{Proof of Lemma~\ref{super2}.}]
	
	\
	
Let $\mathfrak{B}^j_0$ be the families provided by Lemma~\ref{Morse1}. Every ball in these families will be transformed into not more than countable number of balls by the following algorithm. Pick number $q\in(0,1)$. Assume we have a ball $B_r(x)$. We split the ring $B_r(x)\setminus B_{\frac{r}{3}}(x)$ into countable number of rings $F^j(B_r(x))$ whose radii are decreasing like geometric progression. Let $r_j=\frac{r}{3}+\frac{2r(1-q)}{3}\sum_{k=0}^jq^k$, then
\eq{F^j(B_r(x))= B_{r_j}(x)\setminus B_{r_{j-1}}(x),}
\eq{B_r(x)\setminus B_{\frac{r}{3}}(x)=\underset{j=0}{\overset{\infty}{\bigcup}}F^j(B_r(x)).}
If $l=\frac{2(1-q)}{3}$, then the radius of $F^j(B_r(x))$ is $rlq^j$. Let $E^j(B_r(x))$ be the maximal $\frac{rlq^{j+1}}{3}$-separated subset of $F^j(B_r(x))\cap A$. If we put in every point of $E^j(B_r(x))$ a ball with center at this point and radius $\frac{rlq^{j+1}}{6}$ we will have disjoint family of balls. All balls in this family are contained in $\frac{rlq^{j+1}}{6}$-neighborhood of the ring $F^j(B_r(x))$. This neighborhood is inside the ring with center $x$ radius $rlq^j+2\frac{rlq^{j+1}}{6}<2rlq^j$, and size $\frac{rlq^{j+1}}{6}+r_j<r$. The sum of the volumes of balls does not exceed the volume of the ring, so
\eq{|E^j(B_r(x))|\pi_d \left(\frac{rlq^{j+1}}{6}\right)^d\leqslant 2\omega_{d-1}lq^jr^d,}
\begin{equation}\label{kolsa3}
|E^j(B_r(x))|\leqslant C_0q^{(1-d)j},
\end{equation}
where $C_0=\frac{2\omega_{d-1}l}{\pi_d(\frac{lq}{6})^d}$.
\begin{figure}[h]
	\center{\includegraphics[scale=0.5]{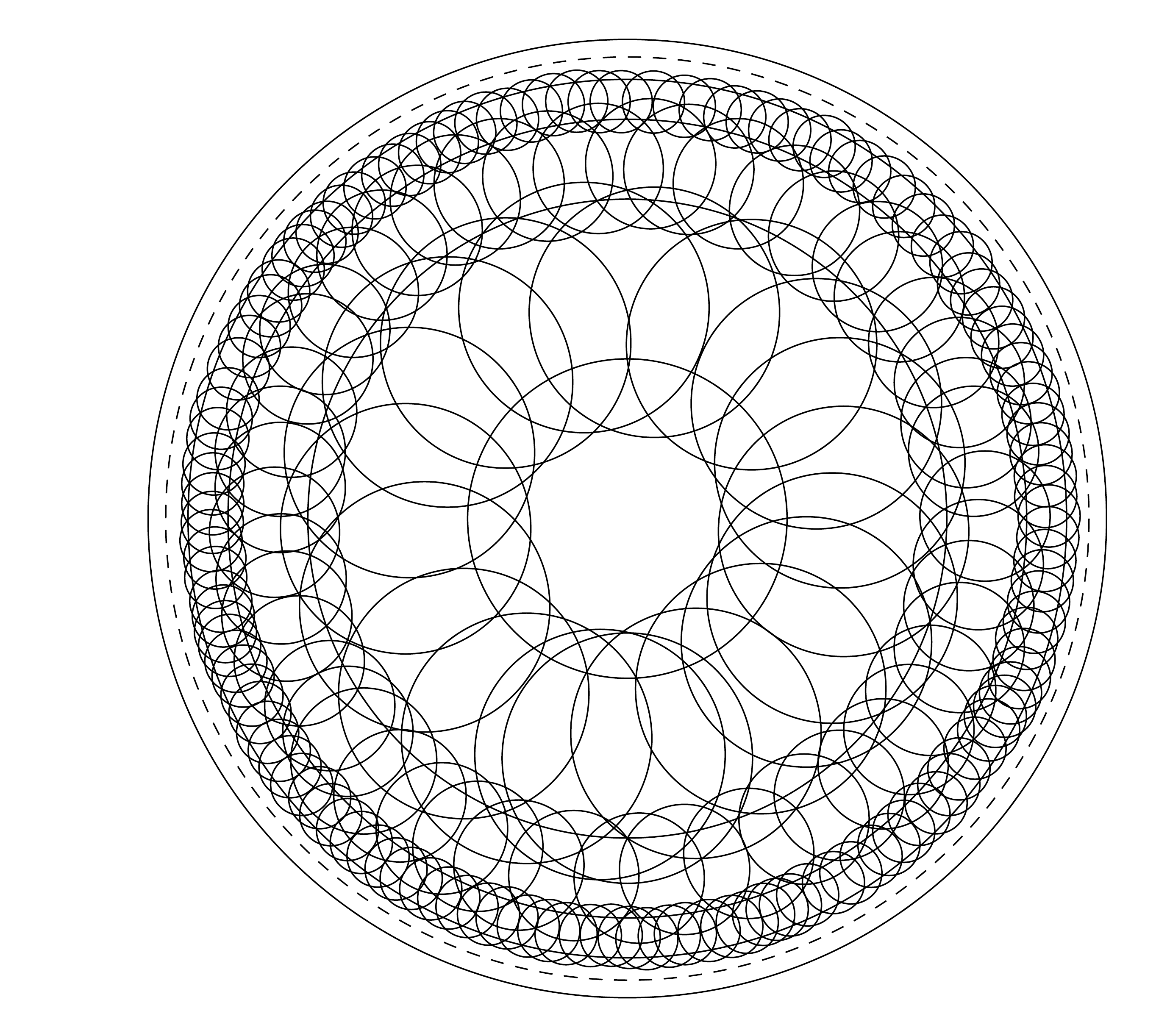}}
	\caption{Families $\mathfrak{C}^j(B_r(x))$.}\label{pic}
\end{figure}
The family $\mathfrak{C}^j(B_r(x))$ contain the balls with radii $rlq^{j+1}$ and centers in the set $E^j(B_r(x))$ (see figure~\ref{pic}). It is clear that the family $\mathfrak{C}^j(B_r(x))$ can be split into disjoint subfamilies $\mathfrak{C}^j_1(B_r(x))$,...,$\mathfrak{C}^j_{100^d}(B_r(x))$. The balls in family $\mathfrak{C}^j(B_r(x))$ does not intersect the rings $F^i(B_r(x))$ if $|i-j|\geqslant 2$, so the balls in family $\mathfrak{C}^j(B_r(x))$ do not intersect the balls in family $\mathfrak{C}^i(B_r(x))$ if $|i-j|\geqslant 3$. Let the family $\mathfrak{A}_{i,j}(B_r(x))$ for $0\leqslant i \leqslant 2$ and $1\leqslant j\leqslant 100^d$ be defined by the formula
\eq{\mathfrak{A}_{i,j}(B_r(x))=\underset{k=0}{\overset{\infty}{\bigcup}}\mathfrak{C}^{3k+i}_j(B_r(x)).}
We can write the inequality
\begin{equation}\label{ooo}
\sum_{B_{\tau_l}(y_l)\in\mathfrak{A}_{i,j}(B_r(x))}{\tau_l}^{\alpha}\overset{\eqref{kolsa3}}{\leqslant}\underset{k=0}{\overset{\infty}{\sum}}C_0q^{-k(d-1)}(lrq^{k+1})^{\alpha}=Cr^{\alpha},
\end{equation}
where $C=C_0l^{\alpha}q^{\alpha}\underset{k=0}{\overset{\infty}{\sum}}q^{k(\alpha-d+1)}$. Here we use that $\alpha\in (d-1,d]$.
Now we are ready to define a supercovering $\mathfrak{B}$ of the set $A$:

\eq{\mathfrak{B}=\left(\underset{m}{\bigcup}\underset{B_{r_k}(x_k)\in \mathfrak{B}^{m}_0}{\bigcup}\underset{i,j}{\bigcup}\mathfrak{A}_{i,j}(B_{r_k}(x_k))\right)\bigcup\left(\underset{j}{\bigcup}\mathfrak{B}^j_0 \right).}
We split $\mathfrak{B}$ into subfamilies $\mathfrak{B}^{m,i,j}$ and $\mathfrak{B}^j_0$, where 
\eq{\mathfrak{B}^{m,i,j}=\underset{B_{r_k}(x_k)\in \mathfrak{B}^{m}_0}{\bigcup}\mathfrak{A}_{i,j}(B_{r_k}(x_k)),}

and finish the proof with the estimate

\eq{\sum_{B_{r_k}(x_k)\in\mathfrak{B}^{m,i,j}}r_k^{\alpha}=
\sum_{B_{r_k}(x_k)\in \mathfrak{B}^{m}_0} \ \sum_{B_{\tau_l}(y_l)\in\mathfrak{A}_{i,j}(B_r(x))}{\tau_l}^{\alpha}\overset{\eqref{ooo}}{\leqslant}
 C\sum_{B_{r_k}(x_k)\in \mathfrak{B}^{m}_0}r_k^{\alpha}<C_2a.}

\end{proof}
\section{Proofs of lemmas and Theorem}\label{S3}
\begin{Le}\label{PPP}
	Let $\mu$ be a signed measure, let $A_{+}$ and $A_{-}$ be the sets of its Hahn decomposition, let $\mu_{+}$ and $\mu_{-}$ be its positive and negative parts. Consider the set 
	\eq{P_{+,\varepsilon}=\Big\{x\in A_{+}\mid\exists \delta(x) \ such \ that \  \forall r<\delta(x) \ \  \mu_{-}(B_r(x))\leqslant\varepsilon\mu_{+}(B_r(x))\Big\}.}
	Then $\mu_{+}(A)=\mu_{+}(P_{+,\varepsilon})$.
\end{Le}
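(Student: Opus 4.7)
I interpret the statement as $\mu_+(A_+)=\mu_+(P_{+,\varepsilon})$; equivalently, setting $E:=A_+\setminus P_{+,\varepsilon}$, the claim becomes $\mu_+(E)=0$. The approach is a Besicovitch-type covering argument that exploits the mutual singularity of $\mu_+$ and $\mu_-$: on $E$ the negative part $\mu_-$ ``dominates'' $\mu_+$ on arbitrarily small balls, yet $\mu_-$ is concentrated on $A_-$ which is disjoint from $A_+$.

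For every $x\in E$ the negation of the condition defining $P_{+,\varepsilon}$ produces a sequence of radii $r_n(x)\downarrow 0$ with $\mu_-(B_{r_n(x)}(x))>\varepsilon\mu_+(B_{r_n(x)}(x))$. Since $\mu$ is of locally bounded variation, it suffices to prove $\mu_+(E\cap K)=0$ for each fixed compact $K$. The driving observation is that $\mu_-$ is carried by $A_-$, hence $\mu_-(E\cap K)=0$; because $\mu_-$ is a locally finite Borel (hence Radon) measure, for every $\eta>0$ there exists an open set $U\supset E\cap K$ with $\mu_-(U)<\eta$.

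For every $x\in E\cap K$ I pick an index $n$ so large that $B_{r_n(x)}(x)\subset U$; the resulting family of balls has bounded radii, centres in $E\cap K$, and contains arbitrarily small balls around every point. The Besicovitch covering theorem then supplies a subfamily $\{B_{r_i}(x_i)\}$ that still covers $E\cap K$ and splits into $N(d)$ subfamilies of pairwise disjoint balls. Exploiting disjointness within each subfamily, one obtains
\[
\mu_+(E\cap K)\leq\sum_i\mu_+(B_{r_i}(x_i))<\frac{1}{\varepsilon}\sum_i\mu_-(B_{r_i}(x_i))\leq\frac{N(d)}{\varepsilon}\mu_-(U)<\frac{N(d)\eta}{\varepsilon}.
\]
Sending $\eta\to 0$ gives $\mu_+(E\cap K)=0$, and a countable exhaustion of $\mathbb{R}^d$ by compacts completes the proof.

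The only subtlety I foresee is the degenerate case in which $\mu_+(B_r(x))=0$; then $\mu_-(B_r(x))\leq\varepsilon\mu_+(B_r(x))$ forces $\mu_-(B_r(x))=0$. Such points $x$ lie outside $\supp(\mu_+)$, so the set of them in $A_+$ is already $\mu_+$-null and can be discarded from $E$ at the outset. Beyond this piece of bookkeeping I do not expect any genuine obstacle: the proof is really a clean application of Besicovitch covering plus outer regularity of $\mu_-$, and the displayed chain of inequalities is its entire content.
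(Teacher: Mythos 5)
The paper states Lemma~\ref{PPP} without proof, so there is no author argument to compare against; the result is treated as known. Your proof is correct. The three pillars are all sound: $\mu_-(E)=0$ because $E\subset A_+$ while $\mu_-$ is concentrated on $A_-$; outer regularity of the Radon measure $\mu_-$ supplies an open $U\supset E\cap K$ of small $\mu_-$-mass; and a Besicovitch/Morse covering, applied to balls with centres in $E\cap K$ on which $\mu_-(B_r(x))>\varepsilon\mu_+(B_r(x))$ and which sit inside $U$, converts this pointwise inequality into $\mu_+(E\cap K)\leq \frac{N(d)}{\varepsilon}\mu_-(U)$. Exhausting $\R^d$ by compacts then finishes.

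Two small remarks. Your paragraph on the ``degenerate case'' $\mu_+(B_r(x))=0$ is unnecessary and slightly miscast: the inequality $\mu_+(B_{r_i}(x_i))<\frac{1}{\varepsilon}\mu_-(B_{r_i}(x_i))$ that you actually use follows directly from $\mu_-(B_{r_i}(x_i))>\varepsilon\mu_+(B_{r_i}(x_i))$ and holds verbatim when $\mu_+(B_{r_i}(x_i))=0$, so no points need to be discarded; moreover the implication you discuss (that $\mu_-(B_r(x))\leq\varepsilon\mu_+(B_r(x))$ forces $\mu_-(B_r(x))=0$) concerns points of $P_{+,\varepsilon}$, not of $E$. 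Also, it is worth observing that the lemma is an instance of the Besicovitch differentiation theorem: since $\mu_+\perp\mu_-$, one has $\lim_{r\to 0}\mu_-(B_r(x))/\mu_+(B_r(x))=0$ for $\mu_+$-a.e.~$x$, which gives $\mu_+(A_+\setminus P_{+,\varepsilon})=0$ immediately. Your argument is essentially the standard proof of that differentiation statement specialised to the case at hand, and it is perfectly adequate as a self-contained replacement.
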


Consider the set $P_{+,\varepsilon}^{(N)}$ given by formula
\eq{P_{+,\varepsilon}^{(N)}=\Big\{x\in A_{+}\mid\  \forall r<\frac{1}{N} \ \  \mu_{-}(B_r(x))\leqslant\varepsilon\mu_{+}(B_r(x))\Big\}.}
\begin{Le}
	 Let $x\in P_{+,\varepsilon}^{(N)}$ and let $r<\frac{1}{N}$. Then
	 \eq{\int \varphi\left(\frac{y-x}{r} \right)d\mu_{-}(y)\leqslant \varepsilon\int \varphi\left(\frac{y-x}{r} \right)d\mu_{+}(y)} 
	 for any radial non-negative test-function $\varphi$ supported in $B_1(0)$ that decreases as the radius grows.
\end{Le}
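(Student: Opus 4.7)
The plan is to reduce the inequality to the already-available pointwise estimate on balls by means of the layer-cake representation of $\varphi$. Since $\varphi$ is non-negative, radial, and radially non-increasing with support in $B_1(0)$, its superlevel sets are open balls centered at the origin; more precisely, for each $t \in (0,\|\varphi\|_\infty)$ there exists some $\rho(t) \in [0,1]$ such that
\eq{
\{z\in\R^d \mid \varphi(z) > t\} = B_{\rho(t)}(0).
}
Rescaling and translating, the superlevel sets of $y\mapsto \varphi\!\left(\frac{y-x}{r}\right)$ are balls $B_{r\rho(t)}(x)$ of radius at most $r$.

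The next step is to apply the layer-cake formula to both sides. For any finite positive Borel measure $\nu$ one has
\eq{
\int \varphi\!\left(\frac{y-x}{r}\right) d\nu(y) = \int_0^{\|\varphi\|_\infty} \nu\bigl(B_{r\rho(t)}(x)\bigr)\, dt.
}
Applying this with $\nu = \mu_{+}$ and $\nu = \mu_{-}$ reduces the desired inequality to an integrated comparison of $\mu_{-}(B_{r\rho(t)}(x))$ and $\mu_{+}(B_{r\rho(t)}(x))$.

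Now we use the hypothesis $x \in P_{+,\eps}^{(N)}$ together with $r < \frac{1}{N}$: since $\rho(t) \leq 1$, we have $r\rho(t) < \frac{1}{N}$ for every $t$, hence by the very definition of $P_{+,\eps}^{(N)}$,
\eq{
\mu_{-}\bigl(B_{r\rho(t)}(x)\bigr) \leq \eps\,\mu_{+}\bigl(B_{r\rho(t)}(x)\bigr).
}
Integrating this pointwise (in $t$) inequality over $t\in(0,\|\varphi\|_\infty)$ and invoking the layer-cake identity a second time yields the claim.

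The only technical point I would be careful about is that $\rho(\cdot)$ is Borel measurable, so that Fubini/layer cake applies; this follows at once since $\rho(t) = \sup\{|z| : \varphi(z) > t\}$ is a non-increasing function of $t$. The argument does not use compactness of $\supp \varphi$ or smoothness; only radial symmetry, non-negativity and monotonicity in the radius, which are exactly the hypotheses available.
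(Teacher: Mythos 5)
Your proof is correct. The paper does not actually supply an argument for this lemma---it is stated between Lemma~\ref{PPP} and Lemma~\ref{noc} and left unproved, presumably regarded as immediate---so your write-up fills in a genuine gap. The layer-cake (superlevel-set) reduction is exactly the right mechanism: since $\varphi$ is radial, non-negative and radially non-increasing with $\supp\varphi\subset B_1(0)$, the superlevel sets of $y\mapsto\varphi\!\left(\frac{y-x}{r}\right)$ are balls $B_{r\rho(t)}(x)$ with $r\rho(t)\le r<\frac1N$, the defining inequality $\mu_-(B_s(x))\le\eps\,\mu_+(B_s(x))$ holds for all $s<\frac1N$ by membership in $P_{+,\eps}^{(N)}$, and integrating in $t$ and using the layer-cake identity once more for $\mu_+$ gives the claim. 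The measurability remark is fine and in fact superfluous: $t\mapsto\nu(\{f>t\})$ is automatically non-increasing, hence Borel measurable, for any non-negative $f$ and positive $\nu$, which is all the layer-cake formula needs. One very small point worth noting for completeness is that for $t$ near $\|\varphi\|_\infty$ the radius $\rho(t)$ may be $0$; then $B_{0}(x)=\emptyset$ and the pointwise inequality $0\le\eps\cdot 0$ holds trivially, so the integration is unaffected.
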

\begin{Le}\label{noc}
	Let $x\in P_{+,\varepsilon}^{(N)}$ and let $r<\frac{1}{N}$. Suppose that $\varphi$ is a radial non-negative function supported in a unit ball that
	decreases as the radius grows and let $\psi$ be a function such that $\varphi\leqslant \psi\leqslant \frac{1}{2\varepsilon}\varphi$. Then
	\eq{\int \psi\left(\frac{y-x}{r} \right)d\mu_{+}(y)\leqslant 2\int \psi\left(\frac{y-x}{r} \right)d\mu(y).}
\end{Le}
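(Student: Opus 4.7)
The plan is to derive Lemma~\ref{noc} by combining the pointwise sandwich $\varphi\leq\psi\leq\frac{1}{2\varepsilon}\varphi$ with the preceding unnumbered lemma, which asserts that for $x\in P_{+,\varepsilon}^{(N)}$ and $r<\frac1N$ one has $\int\varphi\bigl(\frac{y-x}{r}\bigr)\,d\mu_{-}(y)\leq\varepsilon\int\varphi\bigl(\frac{y-x}{r}\bigr)\,d\mu_{+}(y)$ for every radial non-negative decreasing test-function $\varphi$ supported in the unit ball. Abbreviate $\psi_{r,x}(y):=\psi((y-x)/r)$ and similarly for $\varphi$; note that the sandwich forces $\psi\geq 0$ and $\supp\psi\subset B_1(0)$.

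First I would rewrite the target inequality using the Jordan decomposition $\mu=\mu_+-\mu_-$: the claim $\int\psi_{r,x}\,d\mu_+\leq 2\int\psi_{r,x}\,d\mu$ is equivalent to $2\int\psi_{r,x}\,d\mu_-\leq\int\psi_{r,x}\,d\mu_+$, and it suffices to prove this. Next I would chain three short steps. Using the upper sandwich $\psi\leq\frac{1}{2\varepsilon}\varphi$ and the non-negativity of $\mu_-$, I obtain $\int\psi_{r,x}\,d\mu_-\leq\frac{1}{2\varepsilon}\int\varphi_{r,x}\,d\mu_-$. The previous lemma then bounds the right-hand side by $\frac{1}{2\varepsilon}\cdot\varepsilon\int\varphi_{r,x}\,d\mu_+=\frac{1}{2}\int\varphi_{r,x}\,d\mu_+$. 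Finally, applying the lower sandwich $\varphi\leq\psi$ against the non-negative measure $\mu_+$ gives $\int\varphi_{r,x}\,d\mu_+\leq\int\psi_{r,x}\,d\mu_+$. Concatenating these three estimates produces $\int\psi_{r,x}\,d\mu_-\leq\frac{1}{2}\int\psi_{r,x}\,d\mu_+$, which is the equivalent form of the target.

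There is essentially no obstacle: the statement is shaped precisely so that the factor $\frac{1}{2\varepsilon}$ in the upper sandwich cancels the $\varepsilon$ provided by the preceding lemma, leaving exactly the factor $\tfrac12$ required to upgrade the one-sided comparison $\mu_-\leq\varepsilon\mu_+$ against the radial reference $\varphi$ to a two-sided comparison against the possibly non-radial $\psi$. The only minor care is that the application of the preceding lemma is legitimate, because $\varphi$ itself is assumed radial, non-negative, unit-ball-supported, and radially non-increasing; the function $\psi$ never needs to share any of these symmetry or monotonicity properties, which is precisely what makes this lemma useful in the sequel.
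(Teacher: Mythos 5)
Your proof is correct and takes essentially the same approach as the paper: the same three-step chain (upper sandwich on $\mu_-$, the preceding lemma, lower sandwich on $\mu_+$) followed by the Jordan decomposition rearrangement.
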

\begin{proof}
	We will write similar estimates:
	\eq{\int \psi\left(\frac{y-x}{r} \right)d\mu_{-}(y)\leqslant\frac{1}{2\varepsilon}\int \varphi\left(\frac{y-x}{r} \right)d\mu_{-}(y)\leqslant \frac{1}{2}\int \psi\left(\frac{y-x}{r} \right)d\mu_{+}(y).}
	So we can write
	\eq{\underset{\mathbb{R}^d}{\int}\psi\left(\frac{y-x}{r}\right)d\mu(y)=\underset{\mathbb{R}^d}{\int}\psi\left(\frac{y-x}{r}\right)d\mu_+(y)-\underset{\mathbb{R}^d}{\int}\psi\left(\frac{y-x}{r}\right)d\mu_-(y)\geqslant\frac{1}{2}\underset{\mathbb{R}^d}{\int}\psi\left(\frac{y-x}{r}\right)d\mu_+(y).}
\end{proof}
\paragraph{Proof of Lemma~\ref{teor1}.}
Without loss of generality we can say that $\varphi\leqslant \psi\leqslant C\varphi$.
Suppose that $A$ is a set such that $\mathcal{H}^{\alpha}(A)<a$, we will prove that $|\mu|(A)\lesssim h(a)$. Due to Lemma~\ref{PPP} it is enough to prove that $\mu_+(P_{+,\frac{1}{2C}})\lesssim h(a)$. Note that $P_{+,\frac{1}{2C}}=\cup P_{+,\frac{1}{2C}}^{(N)}$, so $\mu_{+}(P_{+,\frac{1}{2C}})\leqslant 2\mu_{+}(P_{+,\frac{1}{2C}}^{(N)})$ for $N$ large enough. Let $K$ be a compact subset of $P_{+,\varepsilon}^{(N)}$ such that $\mu_{+}(P_{+,\varepsilon}^{(N)})\leqslant2\mu_{+}(K)$. We will prove that $\mu_{+}(K)\lesssim h(a)$. Let $\mathfrak{B}$ be a supercovering of set $K$ provided by Lemma~\ref{super1}, and let $\mathfrak{B}^j$ be the corresponding subfamilies of $\mathfrak{B}$. We can write
\begin{multline}
	\mu_{+}(K)\lesssim
	\sum_{B_{r_i}(x_i)\in\mathfrak{B}}\int \psi\left(\frac{y-x_i}{r_i} \right)d\mu_{+}(y)\overset{lem~\ref{noc}}{\lesssim} 
	\sum_{B_{r_i}(x_i)\in\mathfrak{B}}\int \psi\left(\frac{y-x_i}{r_i} \right)d\mu(y)=\\
	\sum_{j=1}^{M}\sum_{B_{r_i}(x_i)\in\mathfrak{B}^j}\int \psi\left(\frac{y-x_i}{r_i} \right)d\mu(y)\lesssim\sum_{j=1}^{M}g\left(\sum_{B_{r_i}(x_i)\in\mathfrak{B}^j}r_i^{\alpha}\right)\leqslant\sum_{j=1}^{M}g(Cq^ja)\lesssim\sum_{j=1}^{M}g(q^ja)\lesssim\\ \int\limits_{0}^{a}\frac{g(t)}{t}dt=h(a).
\end{multline}
\paragraph{Proof of Lemma~\ref{teor2}.}
Let $A$ be the same stats in previous proof and let $A_{+}$ be the positive part of it. Consider the compact set $K$ such that $K\subset A_{+}$ and $\mu_{+}(A_{+})\leqslant 2\mu_{+}(K)$. Let $V$ be a open set such that $A_{+}\subset V$ and $\mu_{-}(V)\leqslant \varepsilon$. We will prove that $\mu_{+}(K)\lesssim g(a)$. Let $\mathfrak{B}$ be a supercovering of $K$ provided by Lemma~\ref{super2}. We may assume that any ball from $\mathfrak{B}$ lies in $V$. We can write
\begin{multline}
	\mu_+(K)\lesssim
	\sum_{B_{r_i}(x_i)\in\mathfrak{B}}\int\varphi\left(\frac{y-x_i}{r_i} \right)d\mu_{+}(y)=\\
	\sum_{B_{r_i}(x_i)\in \mathfrak{B}}\int\varphi\left(\frac{y-x_i}{r_i} \right)d\mu(y)+\sum_{B_{r_i}(x_i)\in \mathfrak{B}}\int\varphi\left(\frac{y-x_i}{r_i} \right)d\mu_{-}(y) \lesssim\\
	\sum_{j=1}^{C_1}\sum_{B_{r_i}(x_i)\in\mathfrak{B}^{j}}\int\varphi\left(\frac{y-x_i}{r_i} \right)d\mu(y)+C_1\mu_{-}(V)\lesssim \sum_{j=1}^{C_1}g\left(\sum_{B_{r_i}(x_i)\in\mathfrak{B}^j}r_i^{\alpha}\right)+C_1\varepsilon\leqslant\\
	 C_1g(C_2a)+C_1\varepsilon\lesssim g(a)+\varepsilon\underset{\varepsilon\rightarrow0}{\rightarrow}g(a).
\end{multline}

\paragraph{Proof of Lemma~\ref{teor3}.}
Consider the case $\varphi(x)=o(|x|^{-\alpha})$ first.

Let $\vartheta$ be a function such that $\varphi(x)=\vartheta(|x|)$. Let $A$ be a set such that $\mathcal{H}(A)<a$ and let $K$ be a compact set like in the proof of Lemma~\ref{teor1}. Let $\mathfrak{B}$ be a covering of $K$ provided by Lemma~\ref{super} such that the radius of any ball does not exceed $\frac{\varepsilon}{N}$. Note that for $r<\frac{\varepsilon}{N}$ we have
\eq{\int\limits_{\mathbb{R}^d\setminus B_{\frac{1}{N}}(x)}\psi\left(\frac{y-x}{r}\right)d|\mu|(y)\lesssim |\mu|(\mathbb{R}^d)r^{\alpha}\frac{\vartheta(\frac{1}{Nr})}{r^{\alpha}}\leqslant r^{\alpha}\delta(\varepsilon).}
Where $\delta(\varepsilon)=|\mu|(\mathbb{R}^d)\sup\limits_{r\leqslant\frac{\varepsilon}{N}}\frac{\vartheta(\frac{1}{Nr})}{r^{\alpha}}$. Note that $\delta(\varepsilon)\underset{\varepsilon\rightarrow0}{\rightarrow}0$  because $\varphi(x)=o(|x|^{-\alpha})$. We can write
\begin{multline}
	\mu_{+}(K)\lesssim
	\sum_{B_{r_i}(x_i)\in\mathfrak{B}}\int\limits_{B_{r_i}(x_i)}\psi\left(\frac{y-x_i}{r_i}\right)d\mu_{+}(y)\leqslant
	\sum_{B_{r_i}(x_i)\in\mathfrak{B}}\int\limits_{B_{\frac{1}{N}}(x_i)}\psi\left(\frac{y-x_i}{r_i}\right)d\mu_{+}(y)\overset{lem~\ref{noc}}{\lesssim}\\
	\sum_{B_{r_i}(x_i)\in\mathfrak{B}}\int\limits_{B_{\frac{1}{N}}(x_i)}\psi\left(\frac{y-x_i}{r_i}\right)d\mu(y)\leqslant\\
	\sum_{B_{r_i}(x_i)\in\mathfrak{B}}\int\limits_{\mathbb{R}^d}\psi\left(\frac{y-x_i}{r_i}\right)d\mu(y)+
	\sum_{B_{r_i}(x_i)\in\mathfrak{B}}\left|\int\limits_{\mathbb{R}^d\setminus B_{\frac{1}{N}}(x_i)}\psi\left(\frac{y-x_i}{r_i}\right)d\mu(y)\right|\lesssim\\
	\sum_{j=1}^{M}\sum_{B_{r_i}(x_i)\in\mathfrak{B}^j}\int\limits_{\mathbb{R}^d}\psi\left(\frac{y-x_i}{r_i}\right)d\mu(y)+\sum_{B_{r_i}(x_i)\in\mathfrak{B}}r_i^{\alpha}\delta(\varepsilon)\lesssim 
	\sum_{j=1}^{M}g\left(\sum_{B_{r_i}(x_i)\in\mathfrak{B}^j}r_i^{\alpha}\right)+a\delta(\varepsilon)\lesssim\\
	g(a)+a\delta(\varepsilon)\underset{\varepsilon\rightarrow0}{\rightarrow}g(a).
\end{multline}
Second case, $\varphi(x)=O(|x|^{-\alpha})$.

Let $A$ be a set such that $\mathcal{H}^{\alpha}(A)=0$. We will prove that $\mu_{+}(A)=0$. Let $K$ be a compact set like in the previous proof. For any positive $a$ we can write $\mathcal{H}^{\alpha}(A)<a$. If we make the same estimates we will have the inequality $\mu_{+}(K)\lesssim g(a)+a\delta(\varepsilon)\underset{a\rightarrow0}{\rightarrow}0$.

\

Let $\varphi$ be a function in $C_0^{\infty}(\mathbb{R}^{d-1})$ supported in the unit ball.  For $x=(x_1,x_2,\ldots,x_d)\in\mathbb{R}^d$ we write $x_{[i]}$ for the $(d-1)$-dimensional vector that is obtained from $x$ by forgetting the $i$-th coordinate (for example, for $d = 3$, $x_{[2]} = (x_1, x_3)$).

We cite Lemma~2.3 in \cite{StolyarovWojciechowski2014}.
\begin{Le}\label{23}
	Let $\mathfrak{B}$ be a disjoint family of balls in $\mathbb{R}^{d-1}$, let $\psi\in C_0^{\infty}(\mathbb{R})$ be a test function. Suppose that $f$ is a compactly supported function. If $\mu=(\partial_1^{m_1} f, \partial_2^{m_2} f,\ldots, \partial_d^{m_d} f)$ is a measure, then, for all $i=1,2\ldots d$ any $\varphi\in C_0^{\infty}(\mathbb{R}^{d-1})$ supported in unit ball,
	\eq{\sum_{B_{r}(x)\in \mathfrak{B}}\left|\int\limits_{\mathbb{R}^d}\psi(x_i)\varphi\left(\frac{y-x}{r}\right)d\mu_i(y) \right|\lesssim\left(\sum_{B_r(x)\in\mathfrak{B}}r^{d-1}\right)^{\frac{1}{q_i'}}}
	for some fixed $q_i'$ (the constants may depend on $\varphi$ and $\psi$)
\end{Le}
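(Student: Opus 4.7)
The plan is to exploit the structure $\mu_i=\partial_i^{m_i}f$ by integrating by parts $m_i$ times in the $y_i$ variable, thereby reducing the $d$-dimensional measure integral to an integral on $\R^{d-1}$ of a function that inherits anisotropic regularity from the remaining components of $\mu$. Reading $\varphi\bigl(\tfrac{y-x}{r}\bigr)$ as $\varphi\bigl(\tfrac{y_{[i]}-x}{r}\bigr)$, it is independent of $y_i$, while $\psi(y_i)$ is smooth; testing $\mu_i$ against this product and transferring all $m_i$ derivatives onto $\psi$ yields
\[
\int \psi(y_i)\varphi\!\left(\tfrac{y_{[i]}-x}{r}\right) d\mu_i(y)=(-1)^{m_i}\int_{\R^{d-1}}\varphi\!\left(\tfrac{z-x}{r}\right)F_i(z)\,dz,
\]
where $F_i(z):=\int \psi^{(m_i)}(t)\,f\bigl(z_1,\dots,z_{i-1},t,z_i,\dots,z_{d-1}\bigr)\,dt$ is a compactly supported function on $\R^{d-1}$.

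Next I would apply H\"older's inequality twice, with conjugate exponents $(q_i,q_i')$ to be chosen later. Each summand is dominated by
\[
\left\|\varphi\!\left(\tfrac{\cdot-x}{r}\right)\right\|_{L^{q_i'}}\|F_i\|_{L^{q_i}(B_r(x))}=r^{(d-1)/q_i'}\|\varphi\|_{L^{q_i'}}\|F_i\|_{L^{q_i}(B_r(x))},
\]
and summing with H\"older in the counting measure gives
\[
\sum_{B_r(x)\in\mathfrak{B}}\Bigl|\int \psi(y_i)\varphi\!\left(\tfrac{y_{[i]}-x}{r}\right) d\mu_i(y)\Bigr|\lesssim\Bigl(\sum r^{d-1}\Bigr)^{1/q_i'}\Bigl(\sum\|F_i\|_{L^{q_i}(B_r(x))}^{q_i}\Bigr)^{1/q_i}.
\]
Disjointness of $\mathfrak{B}$ lets me bound the last factor by $\|F_i\|_{L^{q_i}(\R^{d-1})}$, so everything reduces to proving $F_i\in L^{q_i}(\R^{d-1})$ for some $q_i\in(1,\infty]$.

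The core of the argument, and the step I expect to be hardest, is supplying this $q_i$ through an anisotropic Sobolev-type embedding. For each $j\neq i$ the distribution $\partial_j^{m_j}F_i$ on $\R^{d-1}$ is in fact a finite signed measure: pairing $F_i$ with $\eta\in C_0^\infty(\R^{d-1})$ and using Fubini rewrites $\langle \partial_j^{m_j}F_i,\eta\rangle$ as the integral of the compactly supported smooth function $\psi^{(m_i)}(y_i)\eta(y_{[i]})$ against the finite measure $\mu_j=\partial_j^{m_j}f$ on $\R^d$. Consequently $F_i$ lies in the anisotropic $\mathrm{BV}$-class on $\R^{d-1}$ with orders $(m_j)_{j\neq i}$, and the corresponding Sobolev embedding for such spaces places $F_i$ in some $L^{q_i}(\R^{d-1})$ with $q_i>1$ (one may take $q_i$ up to the critical index $\tfrac{d-1}{d-1-\bar m}$ determined by the harmonic mean $\bar m$ of $(m_j)_{j\neq i}$; for $d=2$ the trivial choice $q_i=\infty$ already suffices). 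Combining this with the H\"older reduction above yields the claimed bound.
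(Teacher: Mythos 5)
The paper does not prove Lemma~\ref{23}; it simply cites it from~\cite{StolyarovWojciechowski2014}, so there is no in-paper argument to compare against. Your reconstruction follows exactly the route one expects from that reference: (i) since $\varphi\bigl(\tfrac{y_{[i]}-x}{r}\bigr)$ is independent of $y_i$, integrating $\mu_i=\partial_i^{m_i}f$ by parts transfers all $m_i$ derivatives onto $\psi$, giving the $(d-1)$-dimensional representative $F_i$; (ii) two applications of H\"older (one pointwise, one in $\ell^{q_i}$/$\ell^{q_i'}$ over the disjoint balls) produce the factor $\bigl(\sum r^{d-1}\bigr)^{1/q_i'}$ at the cost of $\|F_i\|_{L^{q_i}}$; and (iii) the identity $\langle\partial_j^{m_j}F_i,\eta\rangle=\int\psi^{(m_i)}(y_i)\eta(y_{[i]})\,d\mu_j(y)$ shows $F_i$ has finite anisotropic variation, so an anisotropic Gagliardo--Nirenberg--Sobolev embedding gives $F_i\in L^{q_i}$ with $q_i>1$. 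All three steps are correct; in particular, the Fubini computation proving $\partial_j^{m_j}F_i\in\mathcal{M}(\mathbb{R}^{d-1})$ is the right key observation, and the exponent $q_i=\tfrac{d-1}{d-1-\bar m}$ (with the $L^\infty$ fallback when $\bar m\geq d-1$, using compact support to land in every $L^{q}$) is consistent. The one piece you leave as a black box is the embedding for anisotropic BV-type classes (derivatives are measures rather than $L^1$ functions), which requires a mollification step, but this is standard and not a genuine gap in the argument.
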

\begin{Le}\label{25}
	Let $\mu$ be a Borel measure on $\mathbb{R}^{l+k}$. Suppose that $\mu(I\times A)=0$ for every parallelepiped $I\subset \mathbb{R}^k$ and every Borel $A\subset\mathbb{R}^{l}$ such that $\mathcal{H}^{\alpha}(A)=0$. Then $\mu$ is absolutely continuously with respect to $\mathcal{H}^{\alpha}$. 
\end{Le}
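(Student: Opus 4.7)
The plan is to enclose any $\mathcal{H}^{\alpha}$-null Borel set $E\subset\mathbb{R}^{k+l}$ inside a product of the form $I_0\times A$, where $I_0\subset\mathbb{R}^k$ is a parallelepiped and $A\subset\mathbb{R}^l$ is a Borel set with $\mathcal{H}^{\alpha}(A)=0$; once this is accomplished, the hypothesis and monotonicity of $\mu$ immediately give $\mu(E)\leq\mu(I_0\times A)=0$.

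First I would reduce to bounded $E$. Partition $\mathbb{R}^k$ into a countable family of unit parallelepipeds $\{I_0^{(j)}\}$, write $E$ as the disjoint union of the sets $E\cap (I_0^{(j)}\times\mathbb{R}^l)$, and use countable additivity. So it suffices to handle the case $E\subset I_0\times\mathbb{R}^l$ for a single fixed parallelepiped $I_0$.

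For the main construction, for each $n\in\mathbb{N}$ I would pick a cover $\{B_{r_i^{(n)}}(z_i^{(n)})\}_i$ of $E$ by balls in $\mathbb{R}^{k+l}$ with $r_i^{(n)}<1/n$ and $\sum_i(r_i^{(n)})^{\alpha}<2^{-n}$, which exists because $\mathcal{H}^{\alpha}_{\delta}(E)=0$ for every $\delta>0$. Writing $z_i^{(n)}=(x_i^{(n)},y_i^{(n)})$ with $y_i^{(n)}\in\mathbb{R}^l$, set $A_n=\bigcup_i B_{r_i^{(n)}}(y_i^{(n)})\subset\mathbb{R}^l$, where the balls are taken inside $\mathbb{R}^l$. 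By construction $E\subset I_0\times A_n$ for every $n$, hence $E\subset I_0\times A$ for the Borel set $A=\bigcap_n A_n$. To see that $\mathcal{H}^{\alpha}(A)=0$, fix $\delta>0$ and pick $n$ with $2/n<\delta$; the balls $B_{r_i^{(n)}}(y_i^{(n)})$ form a valid $\delta$-cover of $A\subset A_n$, giving
$$\mathcal{H}^{\alpha}_{\delta}(A)\leq 2^{\alpha}\sum_i (r_i^{(n)})^{\alpha}\leq 2^{\alpha}\cdot 2^{-n}.$$
Letting $n\to\infty$ shows $\mathcal{H}^{\alpha}_{\delta}(A)=0$ for every $\delta>0$, and therefore $\mathcal{H}^{\alpha}(A)=0$. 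The hypothesis then yields $\mu(I_0\times A)=0$, so $\mu(E)=0$.

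The only subtle point in this argument, though minor, is the distinction between the Hausdorff measure and its $\delta$-premeasure: a single cover only controls the premeasure at the scale of its diameters, so one needs the full sequence of covers (with radii tending to zero) and the intersection $A=\bigcap_n A_n$ to force the premeasure of $A$ to zero at \emph{every} scale simultaneously. Everything else is bookkeeping.
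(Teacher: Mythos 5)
Your argument is correct. You are essentially proving the standard fact that the projection $\pi:\mathbb{R}^{k+l}\to\mathbb{R}^l$ is $1$-Lipschitz and hence does not increase $\mathcal{H}^\alpha$, combined with the standard $G_\delta$-hull trick: from a sequence of good covers of $E$ you build $A=\bigcap_n A_n$, a Borel ($G_\delta$) superset of $\pi(E)$ with $\mathcal{H}^\alpha(A)=0$, thereby sidestepping the measurability issue that the raw projection of a Borel set need only be analytic. The reduction to $E\subset I_0\times\mathbb{R}^l$ by tiling $\mathbb{R}^k$ with parallelepipeds and using countable additivity, and the final step $\mu(E)\le\mu(I_0\times A)=0$, are both fine. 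The paper itself does not reprove this lemma but defers to Lemma~2.5 of Stolyarov--Wojciechowski (2014), so your write-up is a self-contained substitute for that citation and follows the route one would expect that proof to take.

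One cosmetic remark: when you bound $\mathcal{H}^\alpha_\delta(A)$ you let $n\to\infty$ "showing $\mathcal{H}^\alpha_\delta(A)=0$ for every $\delta$"; it is cleaner to say that for each fixed $\delta>0$ and \emph{every} $n$ with $2/n<\delta$ one has $\mathcal{H}^\alpha_\delta(A)\le 2^\alpha 2^{-n}$, and since there are arbitrarily large such $n$ the left-hand side must vanish. This is what you mean, but stated as written it could be misread as a single limiting cover.
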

Proof of Lemma~\ref{25} is absolutely similar to the proof of Lemma 2.5 in~\cite{StolyarovWojciechowski2014}.
\paragraph{Proof of Theorem~\ref{Stol}}
Assume the contrary. Let $F$ be some Borel set such that $\mathcal{H}^{d-1}(F)=0$, but $\mu(F)\neq0$. We may assume that $\mu_1(F)\neq0$ (by symmetry) and $F$ is compact (due to the regularity of the measure). Multiplying $f$ by a test function that equals $1$ on $F$, we make $f$ compactly supported without loosing the condition that its higher order derivatives are signed measures. To get a contradiction, it suffices to prove that for every set $A\subset\mathbb{R}^{d-1}$ such that $\mathcal{H}^{d-1}(A)=0$ and every function $\psi\in C_0^{\infty}(\mathbb{R})$, we have:
\begin{equation}\label{fors}
	\int\limits_{A\times \mathbb{R}}\psi(x_1)d\mu_1(x)=0.
\end{equation}
Then, approximating the characteristic function of an interval $I$ by smooth functions, we get the hypothesis of Lemma~\ref{25} with $\alpha=d-1$, which, in its turn, asserts that $\mu_1(F)=0$.

Consider now a complex measure $\mu_{\psi}$ on $\mathbb{R}^{d-1}$ given by formula $\mu_{\psi}(B)=\int_{B\times\mathbb{R}}\psi(x_1)d\mu_1(x)$ and note that (\ref{fors}) holds for any $A$ such that $\mathcal{H}^{d-1}(A)=0$. By Lemma~\ref{23}, $\mu_{\psi}$ satisfies the hypothesis of Lemma~\ref{teor2} with $\alpha=d-1$. Therefore, $\mu_{\psi}$ is absolutely continuous with respect to $\mathcal{H}^{d-1}$.
\section{Generalizations and examples}

\begin{Le}\label{justification}
		Suppose that~$\varphi \in C^{\infty}_0(\mathbb{R}^d)$ is a radial non-negative function supported in a unit ball. Assume that~$\varphi$ decreases as the radius grows and~$\varphi(x) = 1$ when~$|x| \leq\frac34$.  
		Let $\mu$ be a positive Borel measure such that $\mu(B_r(x))\leqslant r^{\alpha}$ for any open ball $B_r(x)$. Let $\nu$ be a signed measure continuous with respect to $\mu$. Then for any disjoint family of balls $\mathfrak{B}$ the inequality
		\begin{equation}
			\sum_{B_{r_j}(x_j)\in\mathfrak{B}}\left|\int\varphi\left(\frac{y-x_j}{r_j}\right)d\nu(y)\right|\lesssim g\left(\sum_{B_{r_j}(x_j)\in\mathfrak{B}}r_j^{\alpha}\right)\text{ holds true},
		\end{equation}
		where
		\begin{equation}
		g(t)=\int\limits_{0}^{t}\left|\frac{d\nu}{d\mu}\right|^{*}(s)ds.
		\end{equation} 
	\end{Le}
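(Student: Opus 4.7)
\textbf{Proof plan for Lemma~\ref{justification}.} The plan is to reduce the estimate to the Hardy--Littlewood inequality for the decreasing rearrangement by exploiting two elementary facts: first, $\varphi$ is bounded by~$1$ and supported in the unit ball, so the weight $\varphi\bigl((y-x_j)/r_j\bigr)$ is controlled by the indicator of $B_{r_j}(x_j)$; second, the balls of $\mathfrak{B}$ are disjoint, so summing the $B_{r_j}(x_j)$-pieces reassembles into an integral over a single set.

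More concretely, set $f=d\nu/d\mu$, which exists and belongs to $L^1_{\mathrm{loc}}(d\mu)$ by the Radon--Nikodym theorem. The first step is to write, for each individual ball,
\eq{
\left|\int\varphi\!\left(\tfrac{y-x_j}{r_j}\right)d\nu(y)\right|\leq \int\varphi\!\left(\tfrac{y-x_j}{r_j}\right)|f(y)|\,d\mu(y)\leq \int_{B_{r_j}(x_j)}|f|\,d\mu,
}
using $0\leq\varphi\leq 1$ and $\supp\varphi\subset B_1(0)$. Summing over $\mathfrak{B}$ and using disjointness of the balls, the left side is bounded by $\int_E |f|\,d\mu$, where $E=\bigcup_{B_{r_j}(x_j)\in\mathfrak{B}} B_{r_j}(x_j)$.

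The second step is to bound $\int_E|f|\,d\mu$ by the distribution of $|f|$. The classical Hardy--Littlewood inequality for the non-increasing rearrangement $|f|^\ast$ with respect to $d\mu$ yields
\eq{
\int_E|f|\,d\mu\leq\int_0^{\mu(E)}|f|^\ast(s)\,ds=g(\mu(E)).
}
The final step is a Frostman-type count: by the hypothesis on $\mu$ and disjointness,
\eq{
\mu(E)\leq\sum_{B_{r_j}(x_j)\in\mathfrak{B}}\mu(B_{r_j}(x_j))\leq\sum_{B_{r_j}(x_j)\in\mathfrak{B}} r_j^{\alpha}.
}
Since $g$ is non-decreasing (as the integral of a non-negative function), combining the last two displays gives the required inequality, with implicit constant equal to~$1$.

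I do not expect any essential obstacle: every step is a one-line consequence of standard facts (boundedness of $\varphi$, disjointness, Hardy--Littlewood, Frostman bound on $\mu$). The only subtlety worth flagging is the standing convention that ``continuous with respect to~$\mu$'' is read as ``absolutely continuous,'' so that the Radon--Nikodym derivative $d\nu/d\mu$ is well defined and the symbol $|d\nu/d\mu|^\ast$ in the definition of $g$ makes sense.
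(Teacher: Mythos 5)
Your proof is correct and follows essentially the same route as the paper's: bound $\varphi$ by the indicator of the ball, use disjointness to collapse the sum to $|\nu|(\bigcup B_{r_j}(x_j))=\int_E|f|\,d\mu$, apply the Hardy--Littlewood rearrangement inequality, and finish with the Frostman bound $\mu(E)\leq\sum r_j^\alpha$ together with monotonicity of $g$. The paper's argument is just a compressed version of yours.
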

	Here and in what follows the notation~$f^{*}$ means the monotonic rearrangement of function $f$ and $\frac{d\nu}{d\mu}$ is the density of $\nu$ with respect to $\mu$. The monotonic rearrangement of function $f$ can be defined by formula:
	\begin{equation}
	f^{*}(t)=\sup\Set{\alpha}{ \mu\Set{x}{f(x)>\alpha}\leqslant t}.
	\end{equation}
	
	\begin{proof}
		Let $\mathfrak{B}$ be a disjoint family of balls and let $\mathcal{B}=\underset{B_{r_j}(x_j)\in \mathfrak{B}}{\bigcup}B_{r_j}(x_j)$. Note that $\mu(\mathcal{B})\leqslant \sum_{B_{r_j}(x_j)\in\mathfrak{B}}r_j^{\alpha}$. We can write
		\begin{equation}
			\sum_{B_r(x)\in\mathfrak{B}}\left|\int\varphi\left(\frac{y-x}{r}\right)d\nu(y)\right|\lesssim |\nu|(\mathcal{B})\leqslant
			\int\limits_0^{\mu(\mathcal{B})}\left|\frac{d\nu}{d\mu}\right|^{*}(s)ds=g(\mu(\mathcal{B}))\leqslant g\left(\sum_{B_r(x)\in \mathfrak{B}}r^{\alpha}\right).
		\end{equation}
	\end{proof}

	 Our lemmas may be generalised for $f$ --- Hausdorf measures.
	 We remind the definition of this measures.
	 \begin{Def}
	 	Let $f$ be a regular function. The $f$ --- Hausdorff measure be defined by the formula  
	 	\begin{equation}
	 		\Lambda_f(A)=\lim\limits_{\delta\rightarrow 0} \underset{\diam(B_j)<\delta}{\underset{A\subset\cup B_j}{\inf}}\sum f(\diam(B_j)).
	 	\end{equation}
	\end{Def}
	Note that if $f(t)=t^{\alpha}$ then $\Lambda_f=\mathcal{H}^{\alpha}$. We will formulate more general versions of our lemmas.
	
	\begin{Le}\label{teor4}
		Let~$\varphi$ be a radially symmetric, radially non-increasing function supported in the unit ball. Assume also~$\varphi(x)=1$ when~$|x|\leqslant \frac{3}{4}$. Let~$\psi$ be equivalent to~$\varphi$. Let~$\mu$ be an~$\R$-valued signed measure of locally bounded variation, let~$g$ be a regular function that satisfies the Dini condition
		\eq{
			\int\limits_0^1\frac{g(t)}{t}dt<\infty.
		}
		Assume that  
		\eq{
			\sum\limits_{B_{r_j}(x_j)\in \mathfrak{B}}\left|\underset{\mathbb{R}^d}{\int}\psi\left(\frac{y-x_j}{r_j} \right) d\mu(y) \right|\lesssim g\Big(\sum\limits_{B_{r_j}(x_j)\in \mathfrak{B}}f(r_j)\Big)
		}
		for any disjoint family of balls~$\mathfrak{B}$. Then,
		\eq{
			|\mu|(A)\lesssim h(\Lambda_f(A)),
		}
		where
		\eq{
			h(x)=\int\limits_0^x\frac{g(t)}{t}dt
		}
		and~$A$ is an arbitrary Borel set~$A \subset \R^d$.
	\end{Le}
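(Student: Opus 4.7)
The plan is to follow the proof of Lemma~\ref{teor1} step for step, replacing the gauge $r\mapsto r^{\alpha}$ throughout by the regular function $f$. The only place where the power law enters essentially is in the covering Lemma~\ref{super1}; everything downstream is a routine transcription once that lemma is generalized.

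First I would generalize the optimality Lemma~\ref{opti} so that the minimization is carried out for the functional $\sum_i f(r_i)$ rather than $\sum_i r_i^{\alpha}$. Since $f$ is continuous and non-decreasing and $\Lambda_f(K)<a$, the same compactness argument on the configuration space $K^{M}\times [0,\eps]^{M}$ produces an optimal (super)covering $\mathfrak{B}_0$ with $\sum_{\mathfrak{B}_0} f(r_i)<a$. Next I would re-run the construction of Lemma~\ref{super1} with this $f$-optimal family $\mathfrak{B}_0$ as input. The splitting into $\mathfrak{C}_1$ and $\mathfrak{C}_2$ and the bound on $\mathfrak{C}_1$ go through after using regularity to replace the estimate $(\eps/9)^{\alpha}\asymp\eps^{\alpha}$ by $f(\eps/9)\asymp f(\eps)$.

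The main step is the inductive geometric decay for the tower $\mathfrak{B}^{>j}$. The same optimality argument gives
\[
\sum_{B_{r_i}(x_i)\in\mathfrak{B}^{>j}} f(r_i/3)\leq \sum_{B_{r_i}(x_i)\in\mathfrak{B}^j} f(3r_i).
\]
Iterating the doubling bound $f(cx)\asymp f(x)$ a bounded number of times yields $f(3r)\asymp f(r)\asymp f(r/3)$ with a constant $M$ depending only on $f$. Hence $\sum_{\mathfrak{B}^{>j}} f(r_i)\leq M^{2}\sum_{\mathfrak{B}^{j}}f(r_i)$, so $\sum_{\mathfrak{B}^{>j+1}}f(r_i)\leq q\sum_{\mathfrak{B}^{>j}}f(r_i)$ with $q=1-M^{-2}\in(0,1)$. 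Iterating this estimate gives $\sum_{\mathfrak{B}^j}f(r_i)\lesssim q^{j}a$, which is the required analogue of Lemma~\ref{super1}.

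With this generalized supercovering in hand, the proof of Lemma~\ref{teor4} is essentially a verbatim copy of the proof of Lemma~\ref{teor1}. Lemmas~\ref{PPP} and~\ref{noc} reduce estimating $|\mu|(A)$ to bounding $\mu_{+}(K)$ for a compact subset $K\subset P_{+,\varepsilon}^{(N)}$, and the hypothesis of Lemma~\ref{teor4} then yields
\[
\mu_{+}(K)\lesssim \sum_{j} g\Bigl(\sum_{B_{r_i}(x_i)\in \mathfrak{B}^{j}}f(r_i)\Bigr)\lesssim \sum_{j} g(q^{j}a).
\]
Regularity of $g$ converts this geometric sum (up to a multiplicative constant) into a Riemann sum for $t\mapsto g(t)/t$ on the partition $\{q^{j}a\}$, and the Dini condition then gives $\sum_{j} g(q^{j}a)\lesssim \int_0^a g(t)/t\,dt=h(a)$. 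The main obstacle is isolating the role of the power law in Lemma~\ref{super1} and checking that only the doubling of the weight is really being used; once this is done the remaining arguments are routine.
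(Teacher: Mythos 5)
Your proposal is correct and follows the same approach as the paper, which simply asserts that the proofs of Lemmas~\ref{teor4}, \ref{teor5}, \ref{teor6} are identical to those of Lemmas~\ref{teor1}, \ref{teor2}, \ref{teor3}. You usefully make explicit that the only step where the power law $r\mapsto r^{\alpha}$ is genuinely exploited is the geometric decay argument inside Lemma~\ref{super1} (via the optimal covering of Lemma~\ref{opti}), and that the regularity condition $f(cx)\asymp f(x)$ is precisely what is needed to replace the identities $(3r)^{\alpha}=3^{\alpha}r^{\alpha}$ there.
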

	
	\begin{Def}
		A regular function $f$ is $d$ --- falling if it satisfies the conditions
		\begin{enumerate}[1)]
			\item 
				\begin{equation}
					\int\limits_0^1\frac{f(t)}{t^d}dt<\infty,
				\end{equation}
			\item 
				\begin{equation}
					x^{d-1}\int\limits_0^x\frac{f(t)}{t^d}dt\asymp f(x).
				\end{equation}
							
		\end{enumerate}
	\end{Def}
	
	\begin{Le}\label{teor5}
		Let~$\varphi$ be a bounded function supported in the unit ball and such that~$\varphi(x)\geqslant1$ when~$|x|\leqslant \frac{3}{4}$. Let~$\mu$ be an~$\R$-valued signed measure of locally bounded variation, let~$g$ be a regular function. Assume also that~$f$ is $d$ --- falling. If
		\eq{
			\sum\limits_{B_{r_j}(x_j)\in \mathfrak{B}}\left|\underset{\mathbb{R}^d}{\int}\varphi\left(\frac{y-x_j}{r_j} \right) d\mu(y) \right|\lesssim g\Big(\sum\limits_{B_{r_j}(x_j)\in \mathfrak{B}}f(r_j)\Big)
		}
		for any disjoint family of balls~$\mathfrak{B}$, then
		\eq{
			|\mu|(A)\lesssim g(\Lambda_f(A))
		}
		for any Borel set~$A \subset \R^d$.
	\end{Le}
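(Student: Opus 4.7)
\textbf{Proof plan for Lemma~\ref{teor5}.} The plan is to follow the skeleton of the proof of Lemma~\ref{teor2}, with $r^{\alpha}$ replaced everywhere by $f(r)$ and $\mathcal{H}^{\alpha}$ by $\Lambda_{f}$. The only nontrivial step that requires a fresh argument is the supercovering Lemma~\ref{super2}: I will prove that for any bounded $A \subset \mathbb{R}^{d}$ with $\Lambda_{f}(A)<a$, any $\varepsilon>0$, and any $d$-falling regular $f$, there exists a supercovering $\mathfrak{B}$ of $A$ with centers in $\overline{A}$ and radii at most $\varepsilon$, that splits into $C_{1}(d)$ disjoint subfamilies $\mathfrak{B}^{j}$ satisfying $\sum_{B_{r_{i}}(x_{i}) \in \mathfrak{B}^{j}} f(r_{i}) \leq C_{2}\,a$.

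The construction of this supercovering mirrors that of Lemma~\ref{super2} step by step: start from a near-optimal covering of $A$ for $\Lambda_{f}$, reduce multiplicity via Lemma~\ref{Morse1}, and fill each annular ring $F^{j}(B_{r}(x))$ by balls centered at a maximal $\frac{rlq^{j+1}}{3}$-separated subset of points. The geometric volume count~\eqref{kolsa3} remains unchanged, giving $|E^{j}(B_{r}(x))| \leq C_{0}\, q^{(1-d)j}$. The only place where the strict inequality $\alpha>d-1$ was used is the estimate~\eqref{ooo}; its analog here becomes
\begin{equation*}
\sum_{B_{\tau_{l}}(y_{l}) \in \mathfrak{A}_{i,j}(B_{r}(x))} f(\tau_{l}) \leq \sum_{k=0}^{\infty} C_{0}\, q^{-k(d-1)}\, f(l r q^{k+1}).
\end{equation*}
Setting $u_{k} = l r q^{k+1}$, we have $q^{-k(d-1)} = (lrq)^{d-1} u_{k}^{1-d}$, so the right-hand side equals $(lrq)^{d-1} \sum_{k} u_{k}^{1-d} f(u_{k})$. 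Since $(u_{k})$ is geometric with ratio $q$ and $f$ is regular, this sum is comparable to the integral $\int_{0}^{lrq} f(t)/t^{d}\,dt$; condition 1 of $d$-falling guarantees its convergence, and condition 2 converts it to $f(lrq)/(lrq)^{d-1}$. Together with the prefactor and another use of regularity of $f$, this yields $\sum f(\tau_{l}) \lesssim f(r)$, and summing over the original covering bounds each $\mathfrak{B}^{j}$ by $C_{2}\,a$.

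Equipped with this generalized supercovering, the rest of the proof repeats the proof of Lemma~\ref{teor2} verbatim. One takes a compact $K \subset A_{+}$ with $\mu_{+}(A_{+}) \leq 2\mu_{+}(K)$, chooses an open $V \supset A_{+}$ with $\mu_{-}(V) \leq \varepsilon$, applies the new supercovering lemma to $K$ inside $V$, and decomposes each integral against $\mu_{+}$ as an integral against $\mu$ plus one against $\mu_{-}$. The hypothesis bounds the $\mu$ part by $\sum_{j=1}^{C_{1}} g\bigl(\sum_{i} f(r_{i})\bigr) \leq C_{1} g(C_{2}\,a) \lesssim g(a)$ using regularity of $g$, while the $\mu_{-}$ part is dominated by $C_{1}\varepsilon$; letting $\varepsilon \to 0$ yields $\mu_{+}(K) \lesssim g(a)$, and symmetry handles $\mu_{-}$.

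The main obstacle is the quantitative step above: checking that the $d$-falling hypothesis is precisely what is needed to replace $\alpha>d-1$ in the ring-filling estimate, i.e.\ that the sum-to-integral comparison combined with the equivalence $x^{d-1} \int_{0}^{x} f(t)/t^{d}\,dt \asymp f(x)$ produces exactly the bound $\sum_{k} q^{-k(d-1)} f(lrq^{k+1}) \lesssim f(r)$. Once this is in place, every remaining manipulation is a routine adaptation of the proofs already in the paper.
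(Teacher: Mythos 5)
Your proposal is correct and follows the route the paper intends: the paper simply states that the proofs of Lemmas~\ref{teor4}--\ref{teor6} are "exactly the same" as those of Lemmas~\ref{teor1}--\ref{teor3}, and you have verified this by isolating the one place where a genuine check is required, namely the analogue of estimate~\eqref{ooo} in Lemma~\ref{super2}. Your computation is right: writing $u_k = lrq^{k+1}$, the sum $\sum_k q^{-k(d-1)} f(lrq^{k+1}) = (lrq)^{d-1}\sum_k u_k^{1-d} f(u_k)$ is, by regularity of $f$, comparable to $(lrq)^{d-1}\int_0^{lrq} f(t)t^{-d}\,dt$, and conditions 1) and 2) of the $d$-falling definition yield convergence and the bound $\asymp f(lrq) \asymp f(r)$, which is exactly the role played by $\alpha>d-1$ in the original argument.
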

	
	\begin{Le}\label{teor6}
		Let~$\varphi$ be a bounded radially symmetric, radially non-increasing function such that~$B_3(0)\subset\supp \varphi $. Let~$\psi$  be equivalent to~$\varphi$. Let~$\mu$ be a signed measure of bounded variation, let~$g$ be a regular function. Assume
		\eq{
			\sum\limits_{B_{r_j}(x_j)\in \mathfrak{B}}\left|\underset{\mathbb{R}^d}{\int}\psi\left(\frac{y-x_j}{r_j} \right) d\mu(y) \right|\lesssim g\Big(\sum\limits_{B_{r_j}(x_j)\in \mathfrak{B}}f(r_j)\Big)
		}
		for any disjoint family of balls~$\mathfrak{B}$. If
		\eq{
			|\varphi(x)|=O(f(|x|^{-1})), \ \ x\rightarrow \infty,
		}
		then~$\mu$ is absolutely continuous with respect to~$\Lambda_f$. If
		\eq{
			|\varphi(x)|=o(f(|x|^{-1})),\quad x\to \infty,
		}
		then the inequality
		\eq{
			|\mu|(A)\lesssim g(\Lambda_f(A))
		}
		holds true for any Borel set~$A$.
	\end{Le}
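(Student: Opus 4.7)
The strategy is to transcribe the proof of Lemma~\ref{teor3} with two substitutions performed throughout: replace $r^{\alpha}$ by $f(r)$ and $\mathcal{H}^{\alpha}$ by $\Lambda_f$. The regularity of $f$ supplies the quasi-invariance $f(Nr)\asymp f(r)$ for any fixed integer $N$, obtained by iterating $f(cx)\asymp f(x)$ finitely many times, and this absorbs the multiplicative constants produced by the substitutions. After normalizing $\psi\leq C\varphi$, the Hahn decomposition and Lemma~\ref{PPP} reduce both cases to bounding $\mu_+(K)$ for a compact subset $K\subset P_{+,1/(2C)}^{(N)}\cap A$ that captures essentially all of $\mu_+|_A$, with an integer $N$ fixed once and for all and $a\geq \Lambda_f(A)$.

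For a small parameter $\varepsilon>0$ destined to tend to zero, I extract from the definition of $\Lambda_f$ a covering $\{D_j\}$ of $K$ with $\diam(D_j)<\varepsilon/N$ and $\sum_j f(\diam D_j)<a$. Replacing each $D_j$ meeting $K$ by the ball $B_{3\diam D_j}(x_j)$ with $x_j\in D_j\cap K$, and invoking Lemma~\ref{Morse1}, I obtain a covering $\mathfrak{B}$ of $K$ that splits into $\theta(d)$ disjoint subfamilies $\mathfrak{B}^1,\ldots,\mathfrak{B}^\theta$. By the regularity of $f$, each subfamily satisfies $\sum_{B_{r_i}(x_i)\in\mathfrak{B}^s} f(r_i)\lesssim a$.

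For each ball of $\mathfrak{B}$, I split $\int\psi((y-x_i)/r_i)\,d\mu_+(y)$ into a near part on $B_{1/N}(x_i)$ and a far part on its complement. On the near part, the truncation of $\psi((\cdot-x_i)/r_i)$ to $B_{1/N}(x_i)$ remains radial and non-increasing in $|y-x_i|$, so a variant of Lemma~\ref{noc}, obtained by applying the layer-cake argument to this truncation (whose superlevel sets are balls of radius at most $1/N$ centered at $x_i$), converts $\mu_+$ into $\mu$ up to a bounded factor. For the far part, monotonicity of the radial profile $\vartheta$ of $\psi$ together with the hypothesis $\varphi(x)=o(f(|x|^{-1}))$ and the regularity of $f$ yields
\[
\int_{\mathbb{R}^d\setminus B_{1/N}(x_i)}\psi((y-x_i)/r_i)\,d|\mu|(y)\leq |\mu|(\mathbb{R}^d)\,\vartheta(1/(Nr_i))\leq \delta(\varepsilon)f(r_i),
\]
with $\delta(\varepsilon)\to 0$ as $\varepsilon\to 0$. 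Summing exactly as in the proof of Lemma~\ref{teor3},
\[
\mu_+(K)\lesssim \sum_{s=1}^{\theta} g\!\Bigl(\sum_{B_{r_i}(x_i)\in \mathfrak{B}^s}f(r_i)\Bigr)+a\delta(\varepsilon)\lesssim g(a)+a\delta(\varepsilon),
\]
and sending $\varepsilon\to 0$ closes the $o$-case; the symmetric argument for $\mu_-$ delivers $|\mu|(A)\lesssim g(\Lambda_f(A))$. For the $O$-case $\delta(\varepsilon)$ need only remain bounded, so applying the estimate to a set with $\Lambda_f(A)=0$ and letting $a\to 0$ gives $\mu_+(A)=0$, hence absolute continuity.

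The main obstacle is the quantitative bookkeeping with the regularity of $f$ needed to establish $\delta(\varepsilon)\to 0$ uniformly over $r\leq 3\varepsilon/N$. Concretely, given any $\eta>0$ one chooses $\varepsilon$ so small that $\vartheta(1/(Nr))\leq \eta f(Nr)$ for all $r\leq 3\varepsilon/N$, and then iterates $f(cx)\asymp f(x)$ enough times to replace $f(Nr)$ by $f(r)$ at the cost of a constant $C_N$ depending only on $N$; this yields $\delta(\varepsilon)\lesssim \eta$. Once this bookkeeping is in place, every remaining step is a direct adaptation of the corresponding step in Lemma~\ref{teor3}.
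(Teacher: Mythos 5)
Your proposal is correct and follows exactly the route the paper itself indicates: the paper proves Lemma~\ref{teor6} by the single remark that it is ``exactly the same as the proof of Lemma~\ref{teor3}'' with $r^{\alpha}$ replaced by $f(r)$ and $\mathcal{H}^{\alpha}$ by $\Lambda_f$, and you have carried out precisely this transcription, correctly identifying the two places where the substitution requires a small argument (iterating $f(cx)\asymp f(x)$ to absorb the fixed factor $N$ in $\vartheta(1/(Nr))\leq\delta(\varepsilon)f(r)$, and the truncated layer-cake variant of Lemma~\ref{noc} — the latter being applied to the radial majorant $\varphi$ rather than to $\psi$ itself, which is merely equivalent to a radial function, but that is how Lemma~\ref{noc} is set up anyway).
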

	
	The proofs of lemmas \ref{teor4}, \ref{teor5}, \ref{teor6} are exactly the same as the proofs of lemmas  \ref{teor1}, \ref{teor2}, \ref{teor3}.
\

\

\

\

\

\

{\small
	
	St. Petersburg State University Department of Leonhard Euler International Mathematical Institute;\\
	e-mail:dobronravov1999@mail.ru\\
	\bigskip

}

\end{document}